\numberwithin{equation}{section}
\newtheorem{thm}{Theorem}
\newtheorem{theo}[thm]{Theorem}
\newtheorem{lemma}[thm]{Lemma}
\newtheorem{proposition}[thm]{Proposition}
\theoremstyle{remark}
\newtheorem{rmk}[thm]{Remark} 
\title[Extrapolation on Hardy spaces and applications]{Extrapolation on Hardy spaces and applications}
\author{Odysseas Bakas}
\address{Centre for Mathematical Sciences, University of Lund, 221 00 Lund, Sweden}
\email{odysseas.bakas@math.lu.se}
\subjclass[2010]{Primary 30H10, 42B30, 42B35, 46B70; Secondary 42B25.}
\keywords{Extrapolation, Hardy spaces, Orlicz spaces, Lorentz-Zygmund spaces, Zygmund's inequality, Meyer's inequality}
\begin{document}

\begin{abstract}
In this survey article some classical results concerning real interpolation between Hardy spaces are briefly presented and then it is explained how those results can be used to establish Yano-type extrapolation theorems for Hardy spaces. Some new extensions and variants of certain classical endpoint theorems in harmonic analysis are obtained as applications of the extrapolation results presented here.
\end{abstract}

\maketitle

\section{Introduction}\label{intro}
Let $(X, \mu)$ and $(Y, \nu)$ be two finite measure spaces. If $T$ is a sublinear operator such that for some $r>0$ one has $\| T \|_{L^p (X) \rightarrow L^p (Y)} \lesssim (p-1)^{-r}$ for all $1 < p \leq 2$, then it was shown by S. Yano in \cite{Yano} that in order to ensure that $T (f) \in L^1 (Y)$ one needs to impose that $f \in L \log^r L (X)$. See also Theorem 4.41 (ii) in Chapter XII in A. Zygmund's book \cite{Zygmund_book}.

One of the remarkable aspects of Yano's extrapolation theorem is that it established connections between results that had previously been obtained independently to each other. For a list of results that can be connected via extrapolation, see Yano \cite{Yano} and for further applications of extrapolation theory and related historical remarks, see B. Jawerth and M. Milman \cite{JM_1991} and Milman \cite{Milman}. Among the results that Yano's extrapolation theorem has connected, a typical one concerns the classical theorems of M. Riesz \cite{Riesz} and Zygmund \cite{Zygmund_29} on the mapping properties of the periodic Hilbert transform. Recall that if $f$ is a trigonometric polynomial on $\mathbb{T}$, then its periodic Hilbert transform $H(f)$ is given by
$$  H (f) (x) :=   \mathrm{p.v.} \frac{1}{2\pi}  \int_{[-\pi, \pi)} f(x-t) \cot \big( t/2 \big) dt \quad (x \in \mathbb{T}). $$
Here, we identify the torus $\mathbb{T}$ with $\mathbb{R}/( 2 \pi \mathbb{Z} )$. It follows from the work of Riesz \cite{Riesz} that $H$ can be extended as an $L^p (\mathbb{T})$-bounded operator with
\begin{equation}\label{Riesz_thm}
\| H \|_{L^p (\mathbb{T} ) \rightarrow L^p (\mathbb{T})} \lesssim \max \Big\{ \frac{p}{p-1}, p \Big\} \quad (1<p<\infty). 
 \end{equation}
In \cite{Zygmund_29}, Zygmund proved that $H$ maps $L \log L (\mathbb{T})$ to $L^1 (\mathbb{T})$, namely
\begin{equation}\label{Zyg_LlogL}
 \| H (f) \|_{L^1 (\mathbb{T})} \leq \frac{A}{2 \pi} \int_{[- \pi , \pi )} |f (x)| \log^+ |f(x)| dx +B,
\end{equation}
where $A, B >0$ are absolute constants. Observe that the aforementioned result of Zygmund can be regarded as a consequence of Riesz's theorem, via Yano's extrapolation theorem. Notice that, as $H$ is translation-invariant, it follows from T. Tao's converse extrapolation theorem \cite{Tao} that \eqref{Zyg_LlogL} also implies \eqref{Riesz_thm}. Furthermore, in view of the $L^2 (\mathbb{T})$-boundedness of $H$ and Marcinkiewicz-type interpolation, both the above-mentioned results of Riesz and Zygmund  can actually be regarded as consequences of the fact that $H$ is of weak-type $(1,1)$, a result due to A. N. Kolmogorov \cite{Kolmogorov}.
We remark at this point that the exact behaviour of the $L^p-L^p$ operator norm of $H$ was obtained in \cite{Pich_thesis} by S. K. Pichorides, who showed that $\| H \|_{L^p (\mathbb{T}) \rightarrow L^p (\mathbb{T})} = \tan(\pi/2p)$ for $1 <p \leq 2$ and $ \| H \|_{L^p (\mathbb{T}) \rightarrow L^p (\mathbb{T})} = \cot(\pi/2p)$ for $2 \leq p < \infty$; see \cite[Theorem 3.7]{Pich_thesis}. It was also shown in \cite{Pich_thesis} that the constant $A$ in \eqref{Zyg_LlogL} has to satisfy $A>2 /\pi$; see \cite[Theorem 3.4]{Pich_thesis}.

Another example illustrating Yano's extrapolation theorem can be obtained by considering the work of J. Bourgain \cite{Bourgain_89} on the mapping properties of the classical Littlewood-Paley operator and the work of Tao and J. Wright \cite{TW} on Marcinkiewicz multiplier operators. To be more specific, recall that the Littlewood-Paley square function $S_{\mathbb{T}} (f)$ of a trigonometric polynomial $f$ on $\mathbb{T}$ is given by
$$ S_{\mathbb{T}} (f) := \Bigg( \sum_{n \in \mathbb{Z} } |\Delta_n (f) |^2 \Bigg)^{1/2}, $$ 
where the Littlewood-Paley projections $( \Delta_n )_{n \in \mathbb{Z}}$ are defined as follows. If $n = 0$, then one sets $\Delta_0 (f) (x) := \widehat{f} (0)$,  $x \in \mathbb{T}$ and if $n \in \mathbb{N}$, then one defines
$$ \Delta_n (f) (x) := \sum_{k=2^{n-1}}^{2^n - 1} \widehat{f} (k) e^{i k x} \ \mathrm{and}\  \Delta_{-n} (f) (x) := \sum_{k=-2^n + 1}^{-2^{n - 1}} \widehat{f} (k) e^{i k x}, \quad x \in \mathbb{T}. $$ 
The Littlewood-Paley operator $S_{\mathbb{T}}$ can be extended as a sublinear $L^p(\mathbb{T})$-bounded operator for all $1<p<\infty$; see e.g. Chapter XV in \cite{Zygmund_book}. In \cite{Bourgain_89}, Bourgain showed that
\begin{equation}\label{Bourgain_3/2}
 \| S_{\mathbb{T}} \|_{L^p (\mathbb{T}) \rightarrow L^p (\mathbb{T})} \sim (p-1)^{-3/2} \quad (1< p \leq 2) 
\end{equation}
and hence, by Yano's extrapolation theorem, one deduces that
\begin{equation}\label{TW_interp}
 \| S_{\mathbb{T}} (f) \|_{L^1 (\mathbb{T}) } \lesssim \| f \|_{L \log^{3/2} L (\mathbb{T})} .
\end{equation}
We remark that the last estimate \eqref{TW_interp} is also a consequence of the work of Tao and Wright on endpoint mapping properties of Marcinkiewicz multiplier operators \cite{TW}; see \cite{Bakas}. In fact, as observed in \cite{Bakas}, an alternative proof of \eqref{Bourgain_3/2} can be obtained by combining the work of Tao and Wright \cite{TW} with Tao's converse extrapolation theorem \cite{Tao}. In \cite{Lerner}, A. K. Lerner established sharp weighted estimates for the Littlewood-Paley operator  and as a consequence, he obtained yet another proof of \eqref{Bourgain_3/2}. 

The mapping properties of $S_{\mathbb{T}}$ can be improved when we restrict ourselves to the classical Hardy spaces. To be more precise, in \cite{Pichorides}, Pichorides proved that when restricted to $H^p (\mathbb{T})$  the exponent $r=3/2$ in \eqref{Bourgain_3/2} can be improved to $r=1$, namely
\begin{equation}\label{Pich_1}
\sup_{\substack{  f \in H^p (\mathbb{T}) : \\ \| f \|_{L^p (\mathbb{T})} = 1 } }\| S_{\mathbb{T}} (f) \|_{L^p (\mathbb{T})} \sim (p-1)^{-1} \quad (1 < p \leq 2).
\end{equation}
Similarly, the exponent $r=3/2$ in \eqref{TW_interp} can be improved to $r=1$ when we restrict ourselves to $H^1 (\mathbb{T})$, as a classical result of Zygmund \cite[Theorem 8]{Zygmund_38} shows that  $S_{\mathbb{T}} (f) \in L^1 (\mathbb{T})$ if we assume $f \in H^1 (\mathbb{T}) \cap L \log L (\mathbb{T})$, namely
\begin{equation}\label{Zyg_1}
\| S_{\mathbb{T}} (f) \|_{L^1 (\mathbb{T})} \lesssim \| f \|_{L \log L (\mathbb{T})} \quad (f \in H^1 (\mathbb{T})).
\end{equation}

The aforementioned results of Pichorides \eqref{Pich_1} and Zygmund \eqref{Zyg_1} are in fact connected via a variant of Yano's extrapolation theorem for Hardy spaces on the torus. Such versions of Yano's theorem can be obtained as consequences of the abstract extrapolation theories for compatible couples of Banach spaces developed by Jawerth and  Milman \cite{JM_1991} (see also Jawerth and Milman \cite{JM_1992} and Milman \cite{Milman}) and by M. J. Carro and J. Mart\'in \cite{CM} combined with available results concerning real interpolation between Hardy spaces. However, a direct approach that combines Yano's original argument with techniques due to S. V.  Kislyakov \cite{K_1} (see also Kislyakov and Q. Xu \cite{KX, KX_product}) was presented in \cite{Bakas_Yano} for the one-dimensional periodic case. The main purpose of this survey paper is to present some variants of Yano's theorem for Hardy spaces and explain how they can be deduced by using already existing results on real interpolation between Hardy spaces. Some new applications are obtained as applications of the extrapolation results presented here. More specifically, we establish a variant of Zygmund's inequality \eqref{Zyg_1} for functions defined on the real line as well as a two-parameter extension of a classical result due to Y. Meyer \cite{Meyer} concerning `thin' spectral sets of integers.

The paper is organised as follows. For the convenience of the reader, in Section \ref{notation} we provide some notation and background and in Section \ref{Yano_proofs}, a brief overview of some classical results on real interpolation between Hardy spaces is given and then it is shown how those results can be used to establish extrapolation theorems for operators acting on Hardy spaces. In the last section of this paper we obtain some applications of the results presented in Section \ref{Yano_proofs}. 

\section{Notation and Preliminairies}\label{notation}

\subsection{Notation}

If $X,Y > 0$ and $X \leq C Y$, we write $X \lesssim Y$. If $X \lesssim Y$ and $Y \lesssim X$, we write $X \sim Y$.

The set of integers, the set of non-negative integers, and the set of natural numbers are denoted by $\mathbb{Z}$, $\mathbb{N}_0$, and $\mathbb{N}$, respectively. 

If $t \geq 0$, then we use the notation $ \log^+ t := \max \{ \log t, 0 \}$.

If $G$ is a locally compact abelian group, equipped with a Haar measure $m_G$, then the Fourier coefficient $\widehat{f}(\gamma)$ of a function  $f \in L^1 (G)$ at $\gamma \in \widehat{G}$ ($\widehat{G}$ being the dual group of $G$) is given by
$$\widehat{f}(\gamma) :=   \int_G f(x) \overline{ \gamma (x) } d m_G (x). $$

If $G$ is compact then, following \cite{Rudin_book}, we say that $f$ is a trigonometric polynomial on $G$ whenever $f$ is of the form
$$ f(x) = \sum_{i=1}^N a_i \gamma_i (x) \quad (x \in G), $$
where $a_i$ are complex scalars and $\gamma_i \in \widehat{G}$, $i=1,\cdots, N$. If $f$ is a trigonometric polynomial on $\mathbb{T}^d$ and  $\mathrm{supp} (\widehat{f}) \subseteq \mathbb{N}^d_0$, then $f$ is said to be an analytic trigonometric polynomial on $\mathbb{T}^d$.  Also, in the case where $G$ is compact, we take the normalised Haar measure $m_G$, i.e. $m_G (G) =1 $. 

If $f$ is a measurable function defined over some measure space $(S, \mathcal{E}, \mu)$, then its non-increasing rearrangement $f^{\ast}$ is defined by
$$ f^{\ast} (t) := \inf \{ \lambda>0 : \mu (\{ x \in E : |f(x)| > \lambda\} ) \leq t\}  $$
for $t \geq 0$, with the convention that $\inf \emptyset = \infty$.

Let $(X, \mu)$ and $(Y, \nu)$ be two measure spaces and let $T$ be an operator that maps measurable functions in $(X, \mu)$ to measurable functions in $(Y, \nu)$. We say that $T$ is sublinear if, and only if, for all measurable functions $f,g$ on $X$ and for every constant $c \in \mathbb{C}$ one has $|T(c f)| = |c| |T(f)|$, $|T(f + g)| \leq |T(f)| + |T(g)|$, and $|T(f) - T(g)| \leq |T(f-g)|$.

\subsection{Real interpolation}
Let  $\overline{X} = (X_1, X_2)$ be a couple of compatible Banach spaces $(X_i , \| \cdot \|_i )$ $(i=1,2)$, that is, there exists a topological vector space $X_0$ such that $X_i \subseteq X_0$ continuously, $i=1,2$, and so $X_1 + X_2$, $X_1 \cap X_2 \neq \emptyset$ are well-defined. 
For $t>0$, the $K$-functional of $x \in X_1 + X_2$  is given by
$$ K (x, t; X_1, X_2 ) := \inf \big\{ \| x_1\|_1 + t \|x_2\|_2 : x = x_1 + x_2, \ x_i \in X_i \ (i=1,2) \big\} . $$
For any given $x \in X_1 + X_2$, the non-negative function $t \mapsto K (x, t; X_1, X_2 )$ is concave on $(0, \infty)$ and hence, there exists a non-negative decreasing function $ t \mapsto k (x, t ; X_1, X_2 ) $ such that
$$ K (x, t; X_1, X_2 ) = K (x, 0^+; X_1, X_2 ) +  \int_0^t k (x, s; X_1, X_2 ) ds \quad \mathrm{for} \ t>0.$$
The intermediate space $\overline{X}_{\theta,q}=(X_1, X_2)_{\theta, q} $ is defined to be the class of all $ x \in X_1 + X_2$ such that $ \| x \|_{\overline{X}_{\theta,q}} < \infty $, where
$$
\| x \|_{\overline{X}_{\theta,q}} := 
\begin{cases}
\big( \int_0^{\infty} [ t^{-\theta} K(x, t; X_1, X_2) ]^q dt /t \big)^{1/q}&\mathrm{if} \  0 < \theta <1 , \ 1 \leq q < \infty , \\
\sup_{ t > 0} \big\{ t^{-\theta} K(x, t; X_1, X_2) \big\}&\mathrm{if} \ 0 \leq \theta \leq 1 , \ q = \infty.
\end{cases}
$$
For basic notions and results related to abstract interpolation, we refer the reader to the books of C. Bennett and R. Sharpley \cite{BS}, J. Bergh and J. L\"ofstr\"om \cite{B-L_interpolation}, and Y. A. Brudny\u{\i} and N. Y. Krugljak \cite{BK}.

Following G. Pisier \cite{Pisier_I}, if $\overline{X} = (X_1, X_2)$ and $\overline{Y} = (Y_1, Y_2)$ are couples of compatible Banach spaces such that $Y_i \subseteq X_i$ continuously, $i=1,2$, the couple $\overline{Y} = (Y_1, Y_2)$ is said to be $K$-closed in $\overline{X} = (X_1, X_2)$ if there exists a constant $C_0 > 0$ such that 
$$ K(x, t ; Y_1, Y_2) \leq C_0 K(x, t ; X_1, X_2)$$
for every $x \in Y_1 +Y_2$ and for all $t>0$.

\subsection{Function spaces}\label{Llog^r} Let $(S, \mathcal{E}, \mu)$ be a given $\sigma$-finite measure space.

Following \cite{BR}, for $0 < p ,q \leq \infty$ and $r \in \mathbb{Z}$, define the Lorentz-Zygmund space $L^{p,q} \log^r L (S)$ to be the class of all measurable functions $f$ on $(S, \mathcal{E}, \mu)$ satisfying $ \| f \|_{L^{p,q} \log^r L (S)} < \infty$, where
$$
\| f \|_{ L^{p,q} \log^r L (S)} := 
\begin{cases}
\big(\int_0^{\infty} [ t^{1/p} (1 + | \log t | )^r f^{\ast}(t) ]^q dt /t \big)^{1/q} &\mathrm{if} \ q < \infty , \\
\sup_{t > 0} \big\{t^{1/p} (1 + | \log t | )^r f^{\ast}(t) \big\} &\mathrm{if} \ q = \infty.
\end{cases}
$$
For $p,q \in (0, \infty]$ and $r =0$, we write $L^{p,q} \log^0 L  (S) = L^{p,q} (S) $ and if $p=q$, we have $L^{p,p} (S) = L^p (S) $. When $p=q=1$ and $r \in \mathbb{Z}$, we  write $L^{1,1}  \log^r L  (S) = L \log^r L  (S)$. 

For $r>0$, consider the Orlicz function $\Phi_r (t) : = t ( [1 + \log (1+t) ]^r - 1)$,  $t \geq 0$ and define $\Phi_r (S)$ to be the space of all measurable functions $f$ on $(S, \mathcal{E}, \mu)$ such that
$$ \int_S \Phi_r ( |f(x)| ) d \mu (x) < \infty. $$
If we equip $ \Phi_r (S) $ with the Luxemburg-type norm 
$$ \| f \|_{\Phi_r (S)} := \inf \Big\{ \lambda > 0 : \int_S \Phi_r (\lambda^{-1}  |f(x)|) d \mu(x) \leq 1 \Big\}, $$
then $(\Phi_r (S), \| \cdot \|_{\Phi_r (S)} )$ becomes a Banach space. It is well-known that in the case where $\mu (S) < \infty $, the Orlicz space $(\Phi_r (S), \| \cdot \|_{\Phi_r (S)} )$ can be identified with the Lorentz-Zygmund space $( L \log^r L (S), \| \cdot \|_{L \log^r L (S)})$; see \cite[Lemma 10.1]{BR}. 

For more details on Orlicz spaces, we refer the reader to the books of M. A. Krasnosel'ski\u{\i} and Ja. B. Ruticki\u{\i} \cite{K-R} and M. M. Rao and Z. D. Ren \cite{RR}. For more details on Lorentz-Zygmund spaces and their connections with Orlicz spaces, see Bennett and K. Rudnick \cite{BR}.

\subsection{Hardy spaces}
Let $d \in \mathbb{N}$. Given a $0 <  p <  \infty$, the Hardy space $H^p (\mathbb{T}^d ) $ consists of all functions  $f$ on $\mathbb{T}^d$ such that $f$ is the boundary value of a holomorphic function $F $ on the polydisk $\mathbb{D}^d : = \{ ( z_1 ,\cdots, z_d) \in \mathbb{C}^d : |z_1| < 1, \cdots, |z_d| < 1 \} $ satisfying
$$ \sup_{0 \leq r_1, \cdots, r_d < 1} \int_0^{2 \pi } | F( r_1 e^{ i x_1}, \cdots, r_d e^{ i x_d} ) |^p d x_1 \cdots d x_d < \infty .$$
For $p=\infty$, $H^{\infty} (\mathbb{T}^d )$ is the space of all functions on $\mathbb{T}^d$ that are boundary values of bounded holomorphic functions on $\mathbb{D}^d$.

Similarly, for $1 \leq  p < \infty $, $ f \in H^p (\mathbb{R}^d )$ if, and only if, $f$ is the boundary value of a holomorphic function $F $ on $(\mathbb{R}^2_+)^d : = \{ ( z_1,\cdots, z_d ) \in \mathbb{C}^d : \mathrm{Im} (z_1) , \cdots, \mathrm{Im} (z_d) > 0 \} $ such that
$$ \sup_{ y_1, \cdots, y_d >0} \int_{ \mathbb{R}^d } | F( x_1 + i y_1, \cdots, x_d + i y_d ) |^p d x_1 \cdots d x_d < \infty $$
and, for $p=\infty$, $H^{\infty} (\mathbb{R}^d ) $ consists of all functions on $\mathbb{R}^d$ that are boundary values of bounded holomorphic functions on $(\mathbb{R}^2_+)^d$.

It is well-known that for $1 \leq p \leq \infty$, one has 
$$ H^p (\mathbb{T}^d) = \{ f \in L^p (\mathbb{T}^d) : \mathrm{supp}(\widehat{f}) \subseteq \mathbb{N}^d_0 \} $$
and that the spaces $(H^p(\mathbb{T}^d), \| \cdot \|_{L^p (\mathbb{T}^d)} )$ and $(H^p(\mathbb{R}^d), \| \cdot \|_{L^p (\mathbb{R}^d)} )$ are Banach. 

For more details on one-dimensional Hardy spaces we refer the reader to P. L. Duren's book \cite{Duren} and for higher-dimensional Hardy spaces, see, e.g., Chapter 3 in W. Rudin's book \cite{Rudin_polydisk} and Sections 4 and 5 in Chapter XVII in Zygmund's book \cite{Zygmund_book}.

\subsection{$\Lambda(p)$ sets} In this subsection we briefly present some definitions and facts on `thin' spectral sets in harmonic analysis. We remark that the notions and results mentioned here will only be used in Subsection \ref{Meyer_product} in which an extension of a classical result due to Meyer \cite{Meyer} is obtained.

Let $G$ be a compact abelian group. Given a $p \in (2, \infty)$, a set $\Lambda \subseteq \widehat{G}$ is said to be $\Lambda (p)$  if there exists a constant $ C_{\Lambda, p} > 0$ such that 
\begin{equation}\label{Lambda_p}
\| f \|_{L^p (G)} \leq C_{\Lambda, p} \| f \|_{L^2 (G)}
\end{equation}
for every trigonometric polynomial $f$ on $G$ with $\mathrm{supp} (\widehat{f}) \subseteq \Lambda $. If $\Lambda$ is a $\Lambda (p)$ set for some $p \in (2, \infty)$, then the best constant $C_{\Lambda, p}$ in \eqref{Lambda_p} is called the $\Lambda (p)$ constant of $\Lambda$ and is denoted by $ A ({\Lambda}, p) $.
 
A set $\Lambda \subseteq \widehat{G}$ is called Sidon if there exists a constant $S_{\Lambda} > 0$ such that
\begin{equation}\label{Sidon}
 \sum_{\gamma \in \widehat{G}} |\widehat{f} (\gamma)| \leq S_{\Lambda} \| f \|_{L^{\infty} (G)} 
\end{equation}
for every trigonometric polynomial $f$ on $G$ with $\mathrm{supp} (\widehat{f}) \subseteq \Lambda $. Thanks to a classical result of S. Sidon \cite{Sidon}, typical examples of Sidon sets in $\mathbb{Z} \cong \widehat{\mathbb{T}} $ are lacunary sequences. For instance, $\Lambda = (3^k)_{k \in \mathbb{N}_0}$ is a Sidon set. 

In \cite{Rudin, Rudin_book}, Rudin proved that Sidon sets are $\Lambda(p)$ sets with $A (\Lambda, p) \leq A(\Lambda) p^{1/2} $ for all $p > 2$ and, in \cite{Pisier_Sidon}, Pisier proved that the converse also holds true; if  $\Lambda \subseteq \widehat{G}$ is a $\Lambda(p)$ set with $A (\Lambda, p) \leq A(\Lambda) p^{1/2} $ for all $p > 2$, then it is necessarily a Sidon set. See also Chapter VI in the book of M. B. Marcus and Pisier \cite{MP}. For another proof of Pisier's theorem, see Bourgain \cite{Bourgain_Sidon} and for further proofs and extensions of Pisier's theorem, see Bourgain and M. Lewko \cite{B-L} and Pisier \cite{Pisier_twofold}. For more details on $\Lambda(p)$ sets and related topics, we refer the reader to the book of C. C. Graham and K. E. Hare \cite{GH}.

\begin{rmk}\label{trivial_obs1} If $\Lambda \subseteq \widehat{G}$ is a $\Lambda (p)$ set for some $p > 2$, then it follows from \cite[(1.4.1)]{Rudin} that there exists a constant $B_{\Lambda, p}>0$ such that
$$ \| f \|_{L^2 (G)} \leq B_{\Lambda, p} \| f \|_{L^1 (G)} $$
for every trigonometric polynomial $f$ on $G$ with $\mathrm{supp} (\widehat{f}) \subseteq \Lambda $.
\end{rmk}

\section{Yano-type extrapolation theorems on Hardy spaces}\label{Yano_proofs}

This is the main section of the present paper and it is organised as follows. In Subsection \ref{overview}, we give a brief overview of some classical results on real interpolation between Hardy spaces. In subsections \ref{simple_proof} and \ref{abstract_version} it is explained how the interpolation results presented in Subsection \ref{overview} can be used to establish Yano-type extrapolation theorems for various Hardy spaces. 

\subsection{A brief overview of Real Interpolation between Hardy spaces}\label{overview} 

Let $(X, \mu) $ and $(Y, \mu)$ be two $\sigma$-finite measure spaces. The Marcinkiewicz interpolation theorem, in its simplest form, asserts that if $T$ is a sublinear operator that is defined on $L^r (X) + L^q (X)$ for $1 \leq r < q \leq \infty$ and such that $T$ is bounded from $L^r (X)$ to $L^{r, \infty} (Y)$ and bounded from $L^q (X)$ to $L^{q, \infty} (Y)$, then $T$ is $L^p$-bounded for all $p \in (q,r)$; see e.g. Section 4 in Chapter I of E. M. Stein's book \cite{Singular_integrals} or Section 4 in Chapter XII in Zygmund's book \cite{Zygmund_book}. 

In the classical approach for proving the aforementioned version of Marcin\-kiewicz interpolation theorem, say for $q <\infty$, one takes an $f \in L^p (X)  $  and writes
$$  \| T (f) \|^p_{L^p (Y)} = p \int_0^{\infty} \lambda^{p-1} \nu ( \{ y \in Y : |T(f) (y)| > \lambda \} ) d \lambda .$$
For $\lambda >0$, one  then decomposes $f$ as $f = f_{\lambda} + F_{\lambda} $, where $f_{\lambda}$ is such that $|f_{\lambda}(x)| \leq \min \{ |f(x)|, \lambda \} $ for a.e. $x \in X$ and $F_{\lambda} $ satisfies 
$$ \| F_{\lambda} \|^r_{L^r (X)} \leq \int_{ \{ |f| > \lambda \} } |f (x)|^r d \mu (x) .$$
Hence, by using the sublinearity of $T$ and the subadditivity of $\nu$, one has 
\begin{align*}
\| T (f) \|^p_{L^p (Y)} & \leq p \int_0^{\infty} \lambda^{p-1} \nu (\{ y \in Y : |T(f_{\lambda}) (y)| > \lambda/2 \}) d \lambda \\
 & + p \int_0^{\infty} \lambda^{p-1} \nu (\{ y \in Y : |T(F_{\lambda}) (y)| > \lambda/2 \} ) d \lambda  
 \end{align*}
and then one applies the $L^q (X)$ to $L^{q , \infty} (Y)$ boundedness of $T$ to the first integral involving $f_{\lambda}$ and the $L^r (X)$ to $L^{r, \infty} (Y)$ boundedness of $T$ to the second integral involving $F_{\lambda}$. To obtain such a decomposition, one can simply take $f_{\lambda} : = f \chi_{\{ |f| \leq \lambda \} }$ and $F_{\lambda} := f \chi_{ \{ |f | > \lambda \} } $.  
 
The situation becomes more difficult when one works with operators acting on Hardy spaces. To see this, consider for instance the one-dimensional periodic case and notice that for any given non-zero $f \in H^p (\mathbb{T})$, for some $0 < p \leq \infty$, and each measurable set $A \subseteq \mathbb{T}$, the function $f \chi_A$ does not belong to any Hardy space on $\mathbb{T}$ unless $\chi_A = 0$ a.e. on $\mathbb{T}$ or $\chi_A = 1$ a.e. on $\mathbb{T}$, as the zero set of $f$ must be of measure zero; see Chapter 2 in \cite{Duren}.
Therefore, the simple decomposition mentioned above does not give in general functions $f_{\lambda}$, $F_{\lambda}$ that belong to appropriate Hardy spaces and so, the problem of real interpolation between Hardy space is much more delicate. 
 
The first Marcinkiewicz-type decomposition for functions in Hardy spaces was obtained by P. W. Jones in \cite{PJ_1}. More specifically, by constructing explicit solutions of the $\overline{\partial}$-problem on the upper half-plane with Carleson measure data \cite[Theorem 1]{PJ_1}, Jones obtained a Marcinkiewicz-type decomposition for functions belonging to Hardy spaces over the upper-half plane; see \cite[Theorem 2]{PJ_1}. In particular, when $p=1$, it follows from \cite[Theorem 2]{PJ_1} and its proof that there exist an absolute constant $C_0 >0$ such that for every $f \in H^1 (\mathbb{R})$ and $\lambda >0$ one can find $f_{\lambda} \in H^{\infty} (\mathbb{R})$ and $F_{\lambda} \in H^1 (\mathbb{R})$ satisfying the properties
\begin{equation}\label{dec_eucl}
f = f_{\lambda} + F_{\lambda}, 
\end{equation}
\begin{equation}\label{H^inf_eucl}
\| f_{\lambda} \|_{L^{\infty} (\mathbb{R})} \leq C_0 \lambda ,
\end{equation}
and
\begin{equation}\label{H^p_eucl}
\| F_{\lambda} \|_{L^1 (\mathbb{R})} \leq C_0 \int_{\{ N(f) > \lambda \}} N(f) (x) dx,
\end{equation}
where $N(f)$ denotes the non-tangential maximal function of $f$ given by $N (f) (x) := \sup_{|x-x'| < y} | f \ast P_y (x')|$ with $P_y (x) : =  y [\pi (x^2+y^2) ]^{-1}$ ($x \in \mathbb{R}$, $y>0$) being the one-dimensional Poisson kernel. See also \cite{PJ_2} as well as Sections 9 and 10 in Chapter 5 of \cite{BS}. A consequence of the work of Jones \cite{PJ_1, PJ_2} is that the couple $(H^1 (\mathbb{R}), H^{\infty} (\mathbb{R}))$ is $K$-closed in $(L^1 (\mathbb{R}), L^{\infty} (\mathbb{R}))$, that is, there exists a constant $C > 0$ such that 
\begin{equation}\label{K_PJ}
K(f, t ; H^1 (\mathbb{R}), H^{\infty} (\mathbb{R})) \leq C K(f, t ; L^1 (\mathbb{R}), L^{\infty} (\mathbb{R}))
\end{equation}
for every $f \in H^1 (\mathbb{R}) + H^{\infty} (\mathbb{R})$ and for all $t>0$. 

In 1984, in \cite{Bourgain_84}, Bourgain obtained a Marcinkiewicz-type decomposition for functions in  $H^p (\mathbb{T})$ and more specifically, he proved that for every $0 < p < \infty $ there exists a constant $C_p >0$ such that for every $f \in H^p (\mathbb{T})$ and $\lambda >0$ one can find $f_{\lambda} \in H^{\infty} (\mathbb{T})$ and $F_{\lambda} \in H^p (\mathbb{T})$ satisfying the following properties
\begin{equation}\label{dec_torus}
  f  = f_{\lambda} + F_{\lambda}, 
\end{equation}
\begin{equation}\label{H^inf_torus}
| f_{\lambda} (x) | \leq C_p \min \{ |f(x)| , \lambda \} \quad \mathrm{for} \ \mathrm{a.e.} \ x \in \mathbb{T},
\end{equation}
and
\begin{equation}\label{H^p_torus}
\| F_{\lambda} \|^p_{L^p (\mathbb{T})} \leq C_p \int_{ \{ |f| > \lambda \} } | f (x) |^p dx.
\end{equation}
A remarkable aspect of Bourgain's approach is that it  only uses the $L^2(\mathbb{T})$-bounded\-ness of the periodic Hilbert transform. In 1996, Kislyakov and Xu presented in  \cite{KX_product} yet another method for 
decomposing any given function $f \in H^p (\mathbb{T})$ at `height' $\lambda$ as $f = f_{\lambda} + F_{\lambda}$ with $f_{\lambda} \in H^{\infty} (\mathbb{T})$ satisfying \eqref{H^inf_torus} and $F_{\lambda} \in H^p (\mathbb{T})$ satisfying \eqref{H^p_torus}; see \cite[Lemma 5]{KX_product}. The  Marcinkiewicz decomposition obtained in \cite{KX_product} by Kislyakov and Xu was based on their earlier works (see, e.g, Kislyakov \cite{K_1}, Xu \cite{Xu}, and Kislyakov and Xu \cite{KX}) and their approach is also elementary in the sense that, as the above-mentioned method of Bourgain, it only uses the fact that the periodic Hilbert transform is $L^2(\mathbb{T})$-bounded. Let us also mention that in 1992, in \cite{Xu}, Xu, by using appropriate variants of techniques from \cite{K_1}, obtained different proofs and extensions of the interpolation theorems of Jones \cite{PJ_1, PJ_2}. In particular, in \cite{Xu}, Xu  gave an alternative proof of \eqref{K_PJ} in the periodic setting that is, he proved that there exists a constant $C_0 > 0$ such that 
\begin{equation}\label{K_Xu}
K(f, t ; H^1 (\mathbb{T}), H^{\infty} (\mathbb{T})) \leq C_0 K(f, t ; L^1 (\mathbb{T}), L^{\infty} (\mathbb{T}))
\end{equation}
for every $f \in H^1 (\mathbb{T})$ and for all $t > 0$. Furthermore,  Kislyakov and Xu in their aforementioned  1996 paper \cite{KX_product} generalised \eqref{K_PJ} and \eqref{K_Xu} to Hardy spaces of homogeneous-type (see \cite[Theorem 1]{KX_product}) and proved that $(H^p (\mathbb{T}^2), H^{\infty} (\mathbb{T}^2))$ is $K$-closed in $(L^p (\mathbb{T}^2), L^{\infty} (\mathbb{T}^2))$ for all $0 < p < \infty $ (see \cite[Theorem 3]{KX_product}), extending earlier works of Bourgain \cite{Bourgain_92} and Kislyakov \cite{K_notes}, respectively. See also Kislyakov's papers \cite{K_1, K_3}.

For alternative approaches to real interpolation between Hardy spaces and variants of the interpolation results of Jones obtained in \cite{PJ_1, PJ_2}, see also Pisier \cite{Pisier_I, Pisier_II} as well P. F. X. M\"uller \cite{Muller}.

At this point, it is worth noting that, to the best of our knowledge, no interpolation results are known for Hardy spaces $H^p (\mathbb{T}^d)$ for $d \geq 3$. In particular, the $K$-closedness of $(H^1 (\mathbb{T}^d), H^{\infty} (\mathbb{T}^d))$ in $(L^1 (\mathbb{T}^d), L^{\infty} (\mathbb{T}^d))$ is not yet available for $d \geq 3$.  

\subsection{A direct approach}\label{simple_proof}
In \cite{Bakas_Yano}, the following Yano-type theorem for $H^p (\mathbb{T})$ spaces was obtained.

\begin{theo}[\cite{Bakas_Yano}]\label{1d_Yano} Let $T$ be a sublinear operator acting on functions defined over the torus. 
Suppose that there exist constants $C_0, r > 0$ such that
\begin{equation}\label{main_hyp}
\sup_{ \substack{f \in H^p (\mathbb{T}) : \\ \| f \|_{L^p (\mathbb{T})} = 1} } \| T (f) \|_{L^p (\mathbb{T})} \leq C_0 (p-1)^{-r} \quad \mathrm{for}\ \mathrm{all} \ p \in (1,2].
\end{equation}
Then, there exists a constant $D>0$, depending only on $C_0$ and $r$, such that
\begin{equation}\label{main_concl}
 \| T (f) \|_{L^1 (\mathbb{T})} \leq D \| f \|_{L \log^r L (\mathbb{T})} \quad \mathrm{for} \ \mathrm{all} \ f \in H^1 (\mathbb{T}).
\end{equation}
\end{theo}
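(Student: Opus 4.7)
The plan is to follow Yano's original extrapolation strategy: decompose $f$ into pieces of dyadically increasing $L^\infty$-size, apply the hypothesis to each piece at an exponent $p_k$ approaching $1$, and sum. The obstacle is that the natural cutoff $f\,\chi_{\{2^{k-1}<|f|\le 2^k\}}$ does not lie in any Hardy space on $\mathbb{T}$, so I would replace it by the Bourgain/Kislyakov--Xu analytic Marcinkiewicz-type decomposition \eqref{dec_torus}--\eqref{H^p_torus}, which keeps every piece in $H^\infty(\mathbb{T})$.

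First I would normalize by homogeneity so that $\|f\|_{L\log^r L(\mathbb{T})}=1$. For each $k\ge 1$, I would apply \eqref{dec_torus}--\eqref{H^p_torus} at the height $\lambda=2^k$ (with $p=1$) to write $f = f_{2^k}+F_{2^k}$, where $f_{2^k}\in H^\infty(\mathbb{T})$ satisfies the pointwise bound $|f_{2^k}|\lesssim \min(|f|,2^k)$ and $F_{2^k}\in H^1(\mathbb{T})$ satisfies $\|F_{2^k}\|_{L^1}\lesssim \int_{\{|f|>2^k\}}|f|$. Telescoping, set $g_0:=f_1$ and $g_k:=f_{2^k}-f_{2^{k-1}}$ for $k\ge 1$; each $g_k$ lies in $H^\infty(\mathbb{T})$, hence in $H^{p}(\mathbb{T})$ for every $p\in[1,\infty]$, with $\|g_k\|_\infty\lesssim 2^k$ and $\|g_k\|_{L^1}\lesssim \int_{\{|f|>2^{k-1}\}}|f|$, and $f=\sum_{k\ge 0}g_k$ in $L^1(\mathbb{T})$.

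I would then apply the hypothesis \eqref{main_hyp} to each $g_k$ at the exponent $p_k:=1+1/k$ (and $p_0=2$ for $g_0$), combine the resulting bound $\|T(g_k)\|_{L^{p_k}}\le C_0 k^r\|g_k\|_{L^{p_k}}$ with the elementary interpolation $\|g_k\|_{L^{p_k}}^{p_k}\le \|g_k\|_\infty^{p_k-1}\|g_k\|_{L^1}$ and the numerical estimate $(2^k)^{(p_k-1)/p_k}\le 2$, and invoke $\|T(g_k)\|_{L^1}\le \|T(g_k)\|_{L^{p_k}}$ (valid since $m(\mathbb{T})=1$). Sublinearity of $T$ then reduces Theorem \ref{1d_Yano} to the summability estimate $\sum_{k\ge 1}k^r\|g_k\|_{L^1}^{1/p_k}\lesssim \|f\|_{L\log^r L(\mathbb{T})}$.

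The main technical obstacle is precisely this last summation. The tail-type bound $\|g_k\|_{L^1}\lesssim\int_{\{|f|>2^{k-1}\}}|f|$ supplied by the telescoping is not localized at the dyadic layer $\{2^{k-1}<|f|\le 2^k\}$, so a naive Abel rearrangement loses a factor of $\log$ and would deliver only the weaker conclusion $\|T(f)\|_{L^1}\lesssim \|f\|_{L\log^{r+1}L}$. Recovering the sharp exponent $r$ requires refining the telescoping so that the $L^1$-mass of $g_k$ is genuinely concentrated on the $k$-th dyadic layer; this is achieved by the duality techniques of Kislyakov \cite{K_1} (see also \cite{KX,KX_product}), and is the crux of the argument in \cite{Bakas_Yano}. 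Alternatively, one bypasses this summation entirely by invoking the abstract Jawerth--Milman extrapolation machinery \cite{JM_1991} applied to the couple $(H^1(\mathbb{T}),H^\infty(\mathbb{T}))$, whose required $K$-closedness in $(L^1(\mathbb{T}),L^\infty(\mathbb{T}))$ is precisely \eqref{K_Xu}; either route produces the desired bound $\|T(f)\|_{L^1(\mathbb{T})}\le D\|f\|_{L\log^r L(\mathbb{T})}$ with $D$ depending only on $C_0$ and $r$.
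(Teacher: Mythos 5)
Your reconstruction follows the same strategy as the paper's direct sketch in Subsection \ref{simple_proof}: replace Yano's dyadic truncations $f\chi_{\{2^{k-1}<|f|\le 2^k\}}$ by the analytic Marcinkiewicz decompositions of Bourgain or Kislyakov--Xu, apply \eqref{main_hyp} to each piece $g_k$ at an exponent $p_k\to 1^+$, linearize via an elementary inequality of the form $t^{1/p_k}\le Ct+\varepsilon_k$, and sum. More importantly, you have correctly isolated the genuine technical crux, which the paper's sketch passes over quickly. With only the tail bound $\|g_k\|_{L^1}\lesssim\int_{\{|f|>2^{k-1}\}}|f|$ of \eqref{H^1_torus_n}, a Fubini/Abel rearrangement of $\sum_{k\ge1}k^r\|g_k\|_{L^1}$ produces $\int_{\mathbb{T}}|f|(1+\log^+|f|)^{r+1}$, one $\log$ too many: for $|f|\equiv M$ on a set of measure $a$, with $\|f\|_{L\log^r L}\sim aM(\log M)^r$ normalized to $1$, the sum is $\sim aM(\log M)^{r+1}\sim\log M$, which is unbounded. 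The paper's text asserts that ``an application of Fubini's theorem yields'' \eqref{concl_trig} from this tail bound, but as your diagnosis indicates this step requires the $L^1$-mass of $g_k$ to be genuinely concentrated on the $k$-th dyadic layer; this is precisely what the Kislyakov and Kislyakov--Xu duality devices invoked in \cite{Bakas_Yano} achieve, and it can alternatively be bypassed entirely by the abstract route the paper itself takes in Subsection \ref{abstract_version}, combining Theorem \ref{CM_lattice} with the $K$-closedness \eqref{K_Xu} and Hardy's lemma. Your proposal stops short of carrying out either closing argument, but it correctly names the gap, explains why the naive tail estimate is lossy, and points to both standard ways of repairing it --- which is, if anything, a more careful account of this step than the direct sketch it parallels.
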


In \cite{Bakas_Yano}, an elementary proof of Theorem \ref{1d_Yano} was presented, based on Yano's original approach \cite{Yano} combined with arguments of Kislyakov and Xu \cite{K_1, KX, KX_product}. A direct proof of Theorem \ref{1d_Yano} can also be given by using the method of Bourgain \cite{Bourgain_84} concerning Marcinkiewicz-type decompositions of functions in $H^1(\mathbb{T})$. In fact, Theorem \ref{1d_Yano} can be obtained by using a modification of Yano's original argument combined with any method that gives appropriate Marcinkiewicz-type decompositions for Hardy spaces on the torus. 

To be more precise, suppose that $T$ is a sublinear operator satisfying \eqref{main_hyp} and fix an arbitrary analytic trigonometric polynomial $f$ on the torus. Let  $N \in \mathbb{N}$ be such that $2^{N-1} < \| f \|_{L^{\infty} (\mathbb{T})} \leq 2^N$ in the case where $\| f \|_{L^{\infty} (\mathbb{T})} > 1$, or, otherwise, set $N := 0$. 
Consider a finite collection of pairs $\{ (f_{2^n} , F_{2^n} ) \}_{n=0}^N$ such that for $n \in \{ 0, \cdots, N\}$ the pair $(f_{2^n}, F_{2^n})$ is a Marcinkiewicz-type decomposition of $f \in H^1 (\mathbb{T})$ at `height' $\lambda = 2^n$ with associated constant $C > 0 $, namely
\begin{equation}\label{dec_method}
  f  = f_{2^n} + F_{2^n}  \quad \mathrm{with} \ f_{2^n} \in H^{\infty}(\mathbb{T}), F_{2^n} \in H^1(\mathbb{T}),
\end{equation}
\begin{equation}\label{H^inf_torus_n}
| f_{2^n} (x) | \leq C \min \{ |f(x)| , 2^n \} \quad \mathrm{for} \ \mathrm{a.e.} \ x \in \mathbb{T},
\end{equation}
and
\begin{equation}\label{H^1_torus_n}
  \| F_{2^n} \|_{L^1 (\mathbb{T})} \leq C \int_{ \{ |f| > 2^n \} } | f (x) | dx.
\end{equation}
We remark that  the precise construction of $\{ (f_{2^n} , F_{2^n} ) \}_{n=0}^N$ for $n  = 0, \cdots, N $  plays no r\^ole in the proof of Theorem \ref{1d_Yano} that we present here. For instance, it can be constructed either by using the method of Bourgain \cite{Bourgain_84} or the method of Kislyakov and Xu \cite{KX_product}. In any case, having fixed such a collection $\{ (f_{2^n} , F_{2^n} ) \}_{n = 0}^N$ of Marcinkiewicz-type decompositions associated to $f$, write
\begin{equation}\label{decomp}
 f = \sum_{n=0}^N  {\widetilde{f}_n} , 
\end{equation}
where 
$$ 
\widetilde{f}_n := \begin{cases}
 f_1, \quad \mathrm{if} \ n=0,\\
f_{2^n} - f_{2^{n-1}}, \quad \mathrm{if} \ n \geq 1.
\end{cases}
$$

To prove \eqref{main_concl}, note that, as in \cite{Yano}, by using \eqref{decomp}, the sublinearity of $T$, and H\"older's inequality, one gets
\begin{equation}\label{tr_ineq}
\| T(f) \|_{L^1 (\mathbb{T})} \leq C_0 \sum_{ n = 0 }^N (n+1)^r \| \widetilde{f}_n \|_{L^{(n+2)/(n+1)} (\mathbb{T})}  .
\end{equation}
Since by \eqref{H^inf_torus_n} one has $|\widetilde{f}_n| \lesssim 2^n$ for all $n \in \mathbb{N}_0$, by using, as in  \cite{Bakas_Yano}, the elementary inequality $t^{(n+1)/(n+2)}\leq e^{r+2} t + (n+1)^{- (r+2)}$, which is valid for all $t \geq 0$ and $n \in \mathbb{N}$, one can easily deduce that 
\begin{equation}\label{reformulation}
 \| T (f) \|_{L^1 (\mathbb{T})} \lesssim 1 + \sum_{n = 1}^N (n+1)^r \| \widetilde{f}_n \|_{L^1 (\mathbb{T})},
\end{equation}
where the implied constant depends only on $C_0$, $r$ and it is independent of $f$. To handle the right-hand side of \eqref{reformulation}, observe that since $F_{2^n} = f - f_{2^n}$, \eqref{H^1_torus_n} implies that
\begin{align*}
 \int_{\mathbb{T}} |\widetilde{f}_n (x) | dx & =  \int_{\mathbb{T}} | F_{2^{n-1}} (x) - F_{2^n} (x) | dx \\
 & \leq \int_{\mathbb{T}} | F_{2^{n-1}} (x) | dx + \int_{\mathbb{T}} | F_{2^n } (x) | dx \\
 & \leq C \int_{ \{ |f| > 2^{n-1} \} } |f (x)| dx + C \int_{ \{ |f| > 2^n \} } |f (x)| dx  \leq 2 C \int_{\{ |f | > 2^{n-1} \} } | f (x) | dx .
\end{align*}
Therefore, \eqref{reformulation} becomes
$$ \| T (f) \|_{L^1 (\mathbb{T})} \lesssim 1 + \sum_{ n = 1 }^N (n+1)^r \int_{\{ |f| > 2^{n-1} \} } |f (x) |dx $$
and so, an application of Fubini's theorem yields
\begin{equation}\label{concl_trig}
 \| T (f) \|_{L^1 (\mathbb{T})} \lesssim 1 + \int_{\mathbb{T}} | f (x) | \log^r (e + |f(x)| )dx ,
\end{equation}
where the implied constant depends only on $C_0$ and $r$. Since \eqref{concl_trig} holds for all analytic trigonometric polynomials, \eqref{main_concl} can easily be obtained by using the scaling invariance of $T$, followed by a simple density argument involving \cite[Proposition 3.4]{LLQR} and finally, using the fact that the Orlicz space $(\Phi_r (\mathbb{T}), \| \cdot \|_{\Phi_r (\mathbb{T})})$ can be identified with the Lorentz-Zygmund space $(L \log^r L (\mathbb{T}), \| \cdot \|_{L \log^r L (\mathbb{T})})$; see \cite{Bakas_Yano}.


\subsection{Some further remarks and extensions}\label{abstract_version} As mentioned above,
 Theorem \ref{1d_Yano} can also be obtained by combining the $K$-closedness of the couple $(H^1 (\mathbb{T}), H^{\infty} (\mathbb{T}))$ in $(L^1 (\mathbb{T}), L^{\infty} (\mathbb{T}))$ with abstract extrapolation results that can be extracted either from the theory of Jawerth and Milman \cite{JM_1991} or from the theory of Carro and Mart\'in \cite{CM}.
 
Let us now briefly outline how Theorem \ref{1d_Yano} can be deduced from the work of Carro and Mart\'in \cite{CM} and \eqref{K_Xu}, see also Remark \ref{connections} below. The abstract extrapolation theory of \cite{CM} was presented for linear operators, but as remarked in \cite{CM}, if the `target' spaces are lattices, then the extrapolation theory developed there can also be extended to include sublinear operators. In particular, if for a $\sigma$-finite measure space $(S, \mathcal{E}, \mu)$ one takes $\overline{Y} = (L^1 (\mu), L^{\infty} (\mu))$ and $T$ is a sublinear operator taking values in $\overline{Y}$, then $K(Tf,t; L^1 (\mu), L^{\infty} (\mu)) = \int_0^t (Tf)^{\ast} (s) ds$ for all $t>0$ and so, the following version of \cite[Theorem 3.1]{CM} holds true.

\begin{theo}[\cite{CM}]\label{CM_lattice} Let $\overline{X}  = (X_1 ,  X_2)$ be a compatible couple of Banach spaces such that $K(x, 0^+; X_1, X_2) =0$ for all $x \in X_1 + X_2$ and let $ \overline{Y} = (L^1 (\mu) , L^{\infty} (\mu) )$, where $(S, \mathcal{E}, \mu)$ is a $\sigma$-finite measure space.

Given a $\theta_0 \in (0,1)$, suppose that $T$ is a sublinear operator satisfying
\begin{equation}\label{assumption_CM}
\| T \|_{\overline{X}_{\theta,1} \rightarrow \overline{Y}_{\theta, \infty}} \leq A   \theta^{-r} \quad \mathrm{for} \ \mathrm{all} \ 0 < \theta \leq \theta_0 < 1
\end{equation}
for some constants $ A, r >0$. Then, there exists a constant $C>0$ such that
\begin{equation}\label{concl_CM}
\sup_{t>0} \Bigg\{ \frac{\int_0^t  (Tx)^{\ast} (s) ds} {(1 + \log^+ t)^r} \Bigg\} \leq C \int_0^{\infty} k (x, s; X_1, X_2) \big[1 + \log^+ (1/s) \big]^r ds .
\end{equation}
\end{theo}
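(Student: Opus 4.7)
The plan is to combine the standard translation of the hypothesis into pointwise bounds on $\int_0^t(Tx)^*(s)\,ds$ with a dyadic $K$-functional decomposition of $x$, applying the hypothesis to each piece with its own optimised value of $\theta$. First, I would identify $\overline{Y}_{\theta,\infty}$: since $K(y,t;L^1(\mu),L^\infty(\mu))=\int_0^t y^*(s)\,ds$, the hypothesis \eqref{assumption_CM} is equivalent to the pointwise estimate
\begin{equation*}
\int_0^t (Tx)^*(s)\,ds\leq A\theta^{-r}\,t^{\theta}\,\|x\|_{\overline{X}_{\theta,1}},\qquad 0<\theta\leq \theta_0,\ t>0.
\end{equation*}
Using the assumption $K(x,0^+;X_1,X_2)=0$ together with Fubini, I would also rewrite the norm as $\|x\|_{\overline{X}_{\theta,1}}=\theta^{-1}\int_0^\infty u^{-\theta}k(x,u;X_1,X_2)\,du$, which makes the $\theta$-dependence explicit.

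Next, for each $n\in\mathbb{Z}$, I would fix a near-optimal decomposition $x=u_n+v_n$ with $u_n\in X_1$, $v_n\in X_2$, and $\|u_n\|_1+2^n\|v_n\|_2\leq 2K(x,2^n;X_1,X_2)$, and set $w_n=u_n-u_{n-1}=v_{n-1}-v_n$. A standard computation using $K(w_n,t;X_1,X_2)\leq CK(x,2^n;X_1,X_2)\min(1,t/2^n)$ yields
\begin{equation*}
\|w_n\|_{\overline{X}_{\theta,1}}\leq \frac{C\,K(x,2^n;X_1,X_2)}{\theta(1-\theta)}\,2^{-n\theta}.
\end{equation*}
Because $K(x,0^+;X_1,X_2)=0$, the telescoping identity $x=\sum_n w_n$ holds in $X_1+X_2$, and sublinearity of $T$ yields the pointwise bound $|Tx|\leq\sum_n|Tw_n|$, hence $\int_0^t (Tx)^*(s)\,ds\leq\sum_n\int_0^t (Tw_n)^*(s)\,ds$.

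The heart of the argument is then to apply the hypothesis to $w_n$ with a scale-dependent choice $\theta_n=\theta_n(t)$: for $n$ with $2^n\leq t$, take $\theta_n=1/(1+\log(t/2^n))$ (truncated above by $\theta_0$), which makes $(t/2^n)^{\theta_n}$ bounded while yielding $\theta_n^{-r-1}\sim(1+\log(t/2^n))^{r+1}$; for $n$ with $2^n>t$, take $\theta_n=\theta_0$ so that $(t/2^n)^{\theta_0}$ provides geometric decay. Assembling these bounds and converting $K(x,2^n;X_1,X_2)=\int_0^{2^n}k(x,u;X_1,X_2)\,du$ via Fubini turns the $n$-sum into an integral against a logarithmic weight in $u$, producing the desired estimate
\begin{equation*}
\int_0^t (Tx)^*(s)\,ds\lesssim (1+\log^+ t)^r\int_0^\infty k(x,u;X_1,X_2)\bigl[1+\log^+(1/u)\bigr]^r\,du.
\end{equation*}

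The main obstacle is the bookkeeping in this last step: the weight $(1+\log(t/2^n))^{r+1}$ appearing on the output side carries one extra power of $\log$ compared with the desired $(1+\log^+ t)^r$, and it must be absorbed by combining the geometric decay from the large-scale part $2^n>t$ with the monotonicity of $K(x,\cdot;X_1,X_2)$; simultaneously, after writing $\sum_n a_n K(x,2^n;X_1,X_2)=\int_0^\infty k(x,u;X_1,X_2)\sum_{2^n\geq u}a_n\,du$, one must check that the interior sum is dominated by $(1+\log^+ t)^r\,[1+\log^+(1/u)]^r$ uniformly in $t$ and $u$, so that the $(1+\log^+ t)^r$ factor cancels after division. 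Once this matching of logarithmic weights is achieved, taking the supremum in $t$ completes the proof.
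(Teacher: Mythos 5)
The paper does not actually prove this statement: it is cited as a version of [CM, Theorem~3.1] adapted to the lattice target $(L^1(\mu),L^\infty(\mu))$, so there is no internal proof to compare against. What can be assessed is whether your direct argument closes, and it does not: the ``main obstacle'' you flag at the end is a genuine gap, not a bookkeeping issue that can be absorbed.

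The setup is fine up to the application of the hypothesis to the shells $w_n$. Using $\|w_n\|_{\overline{X}_{\theta,1}}\lesssim K(x,2^n)\,2^{-n\theta}/(\theta(1-\theta))$ and the hypothesis, you get
\begin{equation*}
\int_0^t (Tw_n)^*(s)\,ds \;=\; K(Tw_n,t;L^1,L^\infty)\;\lesssim\;\frac{K(x,2^n)}{\theta_n^{\,r+1}(1-\theta_n)}\,\Bigl(\tfrac{t}{2^n}\Bigr)^{\theta_n}.
\end{equation*}
Optimising $\theta_n\in(0,\theta_0]$ for $2^n\le t$ gives the value $\sim\bigl(1+\log(t/2^n)\bigr)^{r+1}K(x,2^n)$, and for $2^n>t$ the choice $\theta_n=\theta_0$ gives geometric decay. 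Now look at a \emph{single} shell with $2^n\sim 1$ and $t$ large: its contribution to $\int_0^t(Tx)^*$ is $\gtrsim(1+\log^+ t)^{r+1}K(x,1)$, so after dividing by $(1+\log^+ t)^r$ you are left with $\gtrsim(1+\log^+ t)\,K(x,1)$, which is unbounded in $t$, whereas the right-hand side $\int_0^\infty k(x,s)[1+\log^+(1/s)]^r\,ds$ is a fixed finite number. Thus even one term of your sum already overshoots the target; the Fubini step only compounds this, since $\sum_{u\le 2^n\le t}(1+\log(t/2^n))^{r+1}\sim(1+\log(t/u))^{r+2}$, and the bound $(1+\log(t/u))^{r+2}\lesssim(1+\log^+t)^r[1+\log^+(1/u)]^r$ you would need is false (take $u\sim 1$, $t\to\infty$). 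There is no way to make the ``matching of logarithmic weights'' work: the exponent $r+1$ already appears at the level of a single $\theta$ (it comes from $\theta^{-r}$ times the $\theta^{-1}$ hidden in $\|\cdot\|_{\overline{X}_{\theta,1}}$), so the discrepancy is intrinsic to this termwise optimisation, not to the Fubini bookkeeping.

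The moral is that extrapolating a one-parameter family of interpolation estimates by applying the estimate shell-by-shell with a shell-optimised parameter is lossy. The Carro--Mart\'in proof (building on Jawerth--Milman) instead works with the abstract $\Sigma$- and $\Delta$-extrapolation spaces $\Sigma_\theta\{M(\theta)\overline{X}_{\theta,1}\}$ and $\Sigma_\theta\{\overline{Y}_{\theta,\infty}\}$ and identifies them explicitly via the $K$- and $k$-functionals, exploiting the global structure of $K(x,\cdot)$ (concavity and the strong form of the fundamental lemma of interpolation) rather than decomposing $x$ dyadically and summing sub-optimal estimates. If you want a self-contained argument in this survey's spirit, you should reproduce that identification rather than patch the termwise scheme; as written, the step from \eqref{reformulation}-type bounds to \eqref{concl_CM} does not go through.
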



To prove Theorem \ref{1d_Yano} by using the previous theorem and \eqref{K_Xu}, suppose that $T$ is a sublinear operator satisfying \eqref{main_hyp} and take $\overline{X} = (H^1 (\mathbb{T}), H^{\infty} (\mathbb{T}))$ and $\overline{Y} = (L^1 (\mathbb{T}), L^{\infty} (\mathbb{T}))$. Then, $T$ satisfies \eqref{assumption_CM} for $\theta_0 =1/2$. Indeed to see this, for any given $\theta \in (0,1/2]$, take a $p \in (1,2]$ such that $\theta = (p-1)/p$ and notice that  one has 
\begin{align*} 
\| T (f) \|_{\overline{Y}_{\theta, \infty}} \lesssim \| T (f ) \|_{L^p (\mathbb{T})}  \lesssim (p-1)^{-r} \| f \|_{L^p(\mathbb{T})} 
& \lesssim  (p-1)^{-r}  \| f \|_{L^{p,1} (\mathbb{T})}  \\
& \lesssim   \theta^{-r} \| f \|_{\overline{X}_{\theta, 1} }  
\end{align*}
for every analytic trigonometric polynomial $f$ on $\mathbb{T}$. By using a simple density argument, one can then extend $T$ to the whole of $\overline{X}_{\theta, 1}$ such that \eqref{assumption_CM} holds. Hence, Theorem \ref{CM_lattice} yields
\begin{multline*}
\| T (f ) \|_{L^1 (\mathbb{T})} = \int_0^{2\pi} (Tf)^{\ast} (s) ds  
 \sim \sup_{ 0 < t \leq 2\pi } \Bigg\{ \frac{  \int_0^t  (Tf)^{\ast} (s)  ds } {(1 + \log^+ t)^r} \Bigg\} \\ 
 \lesssim  \int_0^{2\pi} k (f, s; H^1 (\mathbb{T}), H^{\infty} (\mathbb{T})) \big[ 1 + \log^+ (1/s) \big]^r ds .
\end{multline*}
By using \eqref{K_Xu}, one has  
\begin{align*}
\int_0^t k(f, s; H^1 (\mathbb{T}), H^{\infty} (\mathbb{T})) ds  = K(f, t; H^1 (\mathbb{T}), H^{\infty} (\mathbb{T})) 
& \lesssim K(f, t ; L^1 (\mathbb{T}), L^{\infty} (\mathbb{T})) \\
&= \int_0^t f^{\ast} (s) ds
\end{align*}
for all $f \in H^1 (\mathbb{T}) $ and $t > 0$. Hence, it  follows from  `Hardy's lemma' (see  Proposition 3.6 in Chapter 2 of \cite{BS})  that
\begin{multline*}
 \int_0^{2\pi} k (f, s; H^1 (\mathbb{T}), H^{\infty} (\mathbb{T}))  \big[ 1 + \log^+ (1/s) \big]^r  ds \\
 \lesssim \int_0^{2\pi} f^{\ast}  (s)   \big[ 1 + \log^+ (1/s) \big]^r  ds   \sim \| f \|_{L \log^r L (\mathbb{T})} 
\end{multline*}
and this completes the proof of Theorem \ref{1d_Yano}.

\begin{rmk}\label{connections} As discussed in Section 5 of \cite{CM}, in certain classical real cases the extrapolation theories of Jawerth and Milman \cite{JM_1991} and Carro and Mart\'in \cite{CM} coincide and in particular, one can show that a version of Theorem \ref{CM_lattice} for the case where $\mu (S) < \infty$ can also be deduced from \cite{JM_1991}. See also G. E. Karadzhov and Milman \cite{KM}.
\end{rmk}

\begin{rmk} Notice that, in fact, as a consequence of Theorem \ref{CM_lattice} one obtains a stronger version of Theorem \ref{1d_Yano} in the sense that the same conclusion \eqref{main_concl} holds under the assumption
\begin{multline*}
 \sup_{0 < t \leq 2\pi} \big\{ t^{1/p -1} K (T (f), t ; L^1 (\mathbb{T}), L^{\infty} (\mathbb{T})) \big\} \lesssim \\ (p-1)^{-r} \int_0^{2 \pi} t^{1/p - 1} K (f, t, H^1 (\mathbb{T}), H^{\infty} (\mathbb{T}) )dt/t,  
\end{multline*}
which is weaker than \eqref{main_hyp}.
\end{rmk}

Similarly, by using Theorem \ref{CM_lattice} and \eqref{K_PJ} one obtains the following variant of Yano's theorem for Hardy spaces on the real line.

\begin{theo}\label{euclidean} Let $T$ be a sublinear operator acting on functions defined over the real line. Suppose that there exist constants $C_0 , r >0$ such that
$$ \sup_{ \substack{f \in H^p (\mathbb{R}) : \\ \| f \|_{L^p (\mathbb{R})} = 1} }\| T (f) \|_{L^p (\mathbb{R})} \leq C_0 (p-1)^{-r} \quad \mathrm{for} \ \mathrm{all}\ p \in (1,2]. $$
Then, there exists a constant $C>0$, depending only on $C_0$ and $r$, such that
$$
\sup_{t>0} \Bigg\{ \frac{\int_0^t \big( T (f) \big)^{\ast} (s) ds} { ( 1 + \log^+ t)^r } \Bigg\}  \leq C   \int_0^{\infty} f^{\ast} (s) \big[ 1 + \log^+ (1/s)  \big]^r ds \quad ( f \in H^1 (\mathbb{R}) ).
$$
\end{theo}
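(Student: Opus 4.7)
The plan is to mirror the deduction of Theorem \ref{1d_Yano} from Theorem \ref{CM_lattice} and \eqref{K_Xu} sketched above, with Jones's $K$-closedness \eqref{K_PJ} of $(H^1(\mathbb{R}), H^{\infty}(\mathbb{R}))$ in $(L^1(\mathbb{R}), L^{\infty}(\mathbb{R}))$ taking the place of \eqref{K_Xu}. Set $\overline{X} := (H^1(\mathbb{R}), H^{\infty}(\mathbb{R}))$ and $\overline{Y} := (L^1(\mathbb{R}), L^{\infty}(\mathbb{R}))$. The requirement $K(f, 0^+; H^1(\mathbb{R}), H^{\infty}(\mathbb{R})) = 0$ for every $f \in H^1(\mathbb{R}) + H^{\infty}(\mathbb{R})$, needed for the application of Theorem \ref{CM_lattice}, is immediate from \eqref{K_PJ} combined with $K(f, 0^+; L^1(\mathbb{R}), L^{\infty}(\mathbb{R})) = 0$.

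To verify \eqref{assumption_CM} with $\theta_0 = 1/2$, fix $\theta \in (0, 1/2]$ and set $p := 1/(1 - \theta) \in (1, 2]$, so that $\theta = (p-1)/p$ and $(p-1)^{-1} \sim \theta^{-1}$. The classical real interpolation identity $\overline{Y}_{\theta, \infty} = L^{p, \infty}(\mathbb{R})$, the continuous embedding $\overline{X}_{\theta, 1} \hookrightarrow L^{p, 1}(\mathbb{R})$ furnished by \eqref{K_PJ}, the standard inclusions $L^{p, 1}(\mathbb{R}) \hookrightarrow L^p(\mathbb{R}) \hookrightarrow L^{p, \infty}(\mathbb{R})$, and the hypothesis on $T$ then combine, after a density argument carried out on a suitable analytic test class (e.g.\ boundary values of Cauchy integrals of Schwartz functions with spectrum in a compact subset of $(0, \infty)$), to give
$$ \| T(f) \|_{\overline{Y}_{\theta, \infty}} \lesssim \| T(f) \|_{L^p (\mathbb{R})} \lesssim (p - 1)^{-r} \| f \|_{L^p(\mathbb{R})} \lesssim (p - 1)^{-r} \| f \|_{L^{p, 1}(\mathbb{R})} \lesssim \theta^{-r} \| f \|_{\overline{X}_{\theta, 1}}, $$
so that \eqref{assumption_CM} holds. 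Theorem \ref{CM_lattice} then yields
$$ \sup_{t > 0} \Bigg\{ \frac{\int_0^t (T f)^{\ast}(s)\, ds}{(1 + \log^+ t)^r} \Bigg\} \lesssim \int_0^{\infty} k(f, s; H^1(\mathbb{R}), H^{\infty}(\mathbb{R}))\, [1 + \log^+ (1/s)]^r\, ds $$
for every $f \in H^1(\mathbb{R})$, and integrating \eqref{K_PJ} gives $\int_0^t k(f, s; H^1(\mathbb{R}), H^{\infty}(\mathbb{R}))\, ds \leq C \int_0^t f^{\ast}(s)\, ds$ for all $t > 0$. An application of Hardy's lemma (Proposition 3.6 in Chapter 2 of \cite{BS}) with the decreasing weight $s \mapsto [1 + \log^+(1/s)]^r$ then bounds the right-hand side above by a constant multiple of $\int_0^{\infty} f^{\ast}(s)\, [1 + \log^+(1/s)]^r\, ds$, which is the desired estimate.

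The hard part, just as in the periodic case, is the density step: on the real line there is no completely transparent analogue of analytic trigonometric polynomials, so one has to pin down a tractable dense subclass of $\overline{X}_{\theta, 1}$ (as suggested above) and then verify both that $T$ is well-defined on that class via the hypothesis and that the resulting estimate extends by continuity to the whole intermediate space. Everything else in the proof amounts to bookkeeping with the real interpolation identities and the $K$-functional inequalities already collected in the text.
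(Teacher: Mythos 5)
Your argument is exactly the one the paper intends: immediately before stating Theorem \ref{euclidean}, the text says the result follows by combining Theorem \ref{CM_lattice} with Jones's $K$-closedness \eqref{K_PJ}, which is precisely what you carry out, mirroring the periodic deduction step by step. Your additional remark that the density step on $\mathbb{R}$ requires a suitable analytic test class is a fair observation of a detail the paper leaves implicit, but it does not alter the route; the proof is correct and matches the paper's approach.
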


\begin{rmk} By combining Theorem \ref{CM_lattice} with \cite[Theorem 1]{KX_product} of Kislyakov and Xu, one can extend Theorems \ref{1d_Yano} and \ref{euclidean} to sublinear operators defined over Hardy spaces of homogeneous-type.
\end{rmk}

We end this section with a version of Yano's theorem for two-parameter Hardy spaces.
\begin{theo}\label{Yano_product} Let $T$ be a sublinear operator acting on functions defined over $\mathbb{T}^2$. 
Suppose that there exist constants $C_0, r >0$ such that
$$
\sup_{ \substack{f \in H^p (\mathbb{T}^2) : \\ \| f \|_{L^p (\mathbb{T}^2)} = 1} }\| T (f) \|_{L^p (\mathbb{T}^2)} \leq C_0 (p-1)^{-r} \quad \mathrm{for}\ \mathrm{all} \  p \in (1,2].$$
Then, there exists a constant $B>0$, depending only on $C_0$ and $r$, such that
$$ \| T (f) \|_{L^1 (\mathbb{T}^2)} \leq B \| f \|_{L \log^r L (\mathbb{T}^2)} \quad \mathrm{for} \ \mathrm{all} \ f \in H^1 (\mathbb{T}^2 ).
$$
\end{theo}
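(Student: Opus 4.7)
The plan is to repeat, in spirit verbatim, the derivation of Theorem \ref{1d_Yano} from Theorem \ref{CM_lattice} given in Subsection \ref{abstract_version}, the only new ingredient being the bidisk analogue of the $K$-closedness relation \eqref{K_Xu}. The latter is available thanks to \cite[Theorem 3]{KX_product} of Kislyakov and Xu, which (taking $p=1$) asserts that $(H^1(\mathbb{T}^2), H^\infty(\mathbb{T}^2))$ is $K$-closed in $(L^1(\mathbb{T}^2), L^\infty(\mathbb{T}^2))$.

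First I would set $\overline{X} = (H^1(\mathbb{T}^2), H^\infty(\mathbb{T}^2))$ and $\overline{Y} = (L^1(\mathbb{T}^2), L^\infty(\mathbb{T}^2))$ and verify hypothesis \eqref{assumption_CM} with $\theta_0 = 1/2$. For $\theta \in (0, 1/2]$, take $p = 1/(1-\theta) \in (1, 2]$, so that $\theta = (p-1)/p$. Standard real-interpolation identities give $\overline{Y}_{\theta, \infty} = L^{p, \infty}(\mathbb{T}^2)$ and $(L^1(\mathbb{T}^2), L^\infty(\mathbb{T}^2))_{\theta, 1} = L^{p,1}(\mathbb{T}^2)$; by the above $K$-closedness, the latter coincides, with equivalent norms, with $\overline{X}_{\theta, 1}$ on $H^1(\mathbb{T}^2) + H^\infty(\mathbb{T}^2)$. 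For an analytic trigonometric polynomial $f$ on $\mathbb{T}^2$, chaining the embedding $L^{p,1} \hookrightarrow L^p$, the hypothesis on $T$, and $L^p \hookrightarrow L^{p, \infty}$ yields
\begin{align*}
\| T(f) \|_{\overline{Y}_{\theta, \infty}}
&\lesssim \| T(f) \|_{L^p(\mathbb{T}^2)}
\lesssim (p-1)^{-r} \| f \|_{L^p(\mathbb{T}^2)} \\
&\lesssim (p-1)^{-r} \| f \|_{L^{p, 1}(\mathbb{T}^2)}
\lesssim \theta^{-r} \| f \|_{\overline{X}_{\theta, 1}},
\end{align*}
which is \eqref{assumption_CM} after extending $T$ to $\overline{X}_{\theta, 1}$ by density. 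Theorem \ref{CM_lattice} then delivers \eqref{concl_CM}.

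Next I would convert \eqref{concl_CM} into the stated $L \log^r L$ estimate. Since $\mathbb{T}^2$ has finite Haar measure, the left-hand side of \eqref{concl_CM} is comparable to $\| T(f) \|_{L^1(\mathbb{T}^2)}$; and Kislyakov--Xu's $K$-closedness gives
\[
K(f, t; H^1(\mathbb{T}^2), H^\infty(\mathbb{T}^2)) \lesssim K(f, t; L^1(\mathbb{T}^2), L^\infty(\mathbb{T}^2)) = \int_0^t f^*(s)\, ds
\]
for all $t > 0$ and $f \in H^1(\mathbb{T}^2)$. An application of Hardy's lemma (\cite[Chapter 2, Proposition 3.6]{BS}) then bounds the right-hand side of \eqref{concl_CM} by a constant multiple of $\int_0^\infty f^*(s)\, [1 + \log^+(1/s)]^r\, ds$, which, because $\mathbb{T}^2$ has finite measure, is comparable to $\| f \|_{L \log^r L(\mathbb{T}^2)}$ by the identification of the Lorentz--Zygmund space with the Orlicz space $\Phi_r(\mathbb{T}^2)$.

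The substance of the argument does not lie in the manipulation above, which is a transcription of the one-parameter proof in Subsection \ref{abstract_version}, but in the availability of the product $K$-closedness \cite[Theorem 3]{KX_product}; indeed, as noted at the end of Subsection \ref{overview}, the absence of a corresponding result for $H^p(\mathbb{T}^d)$ with $d \geq 3$ is exactly what prevents the scheme above from extending past two parameters.
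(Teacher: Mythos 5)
Your proposal is correct and follows exactly the same route as the paper, which proves Theorem~\ref{Yano_product} by combining Theorem~\ref{CM_lattice} with the $K$-closedness of $(H^1(\mathbb{T}^2), H^\infty(\mathbb{T}^2))$ in $(L^1(\mathbb{T}^2), L^\infty(\mathbb{T}^2))$ from \cite[Theorem~3]{KX_product}. The only (harmless) overkill is that in verifying \eqref{assumption_CM} you invoke the full $K$-closedness to compare $\overline{X}_{\theta,1}$ with $L^{p,1}$, whereas only the trivial inclusion $K(f,t;L^1,L^\infty)\le K(f,t;H^1,H^\infty)$ is needed at that step; $K$-closedness is genuinely used later, when converting the output of Theorem~\ref{CM_lattice} into the $L\log^r L$ bound.
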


The proof of Theorem \ref{Yano_product} is obtained by combining Theorem \ref{CM_lattice} with \cite[Theorem 3]{KX_product} of Kislyakov and Xu, which asserts that $(H^1 (\mathbb{T}^2), H^{\infty} (\mathbb{T}^2))$ is $K$-closed in $(L^1 (\mathbb{T}^2), L^{\infty} (\mathbb{T}^2))$.

\begin{rmk}
As mentioned in Subsection \ref{overview}, the $K$-closedness of the couple $(H^1 (\mathbb{T}^d),\\ H^{\infty} (\mathbb{T}^d))$ with respect to $(L^1 (\mathbb{T}^d), L^{\infty} (\mathbb{T}^d))$ is still an open problem when $d \geq 3$. Notice that an affirmative answer to this question would automatically imply a $d$-dimensional extension of Theorem \ref{Yano_product} for $d \geq 3$ via the theories of abstract extrapolation mentioned above.
\end{rmk}

\section{Applications}\label{applications}
As mentioned in \cite{Bakas_Yano}, typical applications of Theorem \ref{1d_Yano} are that Pichorides's  theorem \eqref{Pich_1} implies Zygmund's inequality \eqref{Zyg_1} and that  Th\'eo\-r\`eme 1 (a) in Chapter IV of Meyer's paper  \cite{Meyer} implies Th\'eor\`eme 1 (c) in Chapter IV of the same paper, namely that
\begin{equation}\label{1(a)}
 \Bigg( \sum_{\substack{ k, l \in \mathbb{N}_0   : \\ l< k }} |\widehat{f} (3^k - 3^l) |^2 \Bigg)^{1/2} \lesssim \frac{1}{(p-1)^{1/2}} \| f \|_{L^p (\mathbb{T})}   \quad ( f \in H^p (\mathbb{T}) ) 
\end{equation}
implies  
\begin{equation}\label{1(c)}
 \Bigg( \sum_{\substack{ k, l \in \mathbb{N}_0 : \\ l< k }} |\widehat{f} (3^k - 3^l )|^2 \Bigg)^{1/2} \lesssim   \| f \|_{L \log^{1/2} L (\mathbb{T})} \quad \mathrm{for} \ \mathrm{all}\ f \in H^1 (\mathbb{T}) . 
\end{equation}
 Notice that if we remove the analyticity assumptions, then the exponents $r=1/2$ in $(p-1)^{-1/2}$ in \eqref{1(a)} and $r=1/2$ in $L \log^{1/2} L$ in \eqref{1(c)} must be replaced by $r=1$; see \cite[Corollaire 4]{Bonami}.
 
In this section we obtain some new variants of the aforementioned results based on the extrapolation results presented in the previous section. To be more specific, in Subsection \ref{Eucl_Zyg} we present a  version of Zygmund's inequality \eqref{Zyg_1} for functions defined over the real line and in Subsection \ref{Meyer_product} we extend the above-mentioned results of Meyer to the product setting.


\subsection{A Euclidean variant of an inequality due to Zygmund}\label{Eucl_Zyg}

For $n \in \mathbb{Z}$, define the `rough' Littlewood-Paley projection $P_n$ to be the multiplier operator with symbol $\chi_{(-2^{n+1}, - 2^n] \cup  [2^n,  2^{n+1}) }$, that is, one has
$$ (P_n (f))^{\widehat{ \ } } (\xi) =  \chi_{(-2^{n+1}, - 2^n] \cup  [2^n,  2^{n+1}) } (\xi) \cdot \widehat{f} (\xi) \quad (\xi \in \mathbb{R})$$
for all $f $ belonging to the Schwartz class $S(\mathbb{R})$ on $\mathbb{R}$. The corresponding Littlewood-Paley operator is given by
$$ S_{\mathbb{R}} (f) := \Bigg( \sum_{n \in \mathbb{Z}} |P_n (f) |^2 \Bigg)^{1/2} $$
and is initially defined for $ f \in S(\mathbb{R})$. It is well-known that $S_{\mathbb{R}} $ can be extended as  an $L^p (\mathbb{R})$-bounded sublinear operator for all $1< p < \infty$; see e.g. Chapter IV in \cite{Singular_integrals}. Furthermore, it was shown in \cite{BRS} that, when restricted to Hardy spaces $H^p (\mathbb{R})$ for $p$ `close' to $1^+$, one has
\begin{equation}\label{sharp_Hp}
 \sup_{\substack{ f \in H^p (\mathbb{R}) : \\ \| f \|_{L^p (\mathbb{R}) }  = 1 } } \| S_{\mathbb{R}} (f) \|_{L^p (\mathbb{R})} \sim  (p-1)^{-1} \quad (p \rightarrow 1^+),
\end{equation}
which is a real-line version of Pichorides's theorem \eqref{Pich_1}.  
By using \eqref{sharp_Hp} and Theorem \ref{euclidean}, we get the following Euclidean analogue of Zygmund's inequality \eqref{Zyg_1}.

\begin{theo}\label{Zyg_Euclidean} There exists an absolute constant $C>0$ such that
$$ \sup_{t>0} \Bigg\{ \frac{\int_0^t \big( S_{\mathbb{R}} (f) \big)^{\ast} (s) ds} {1 + \log^+ t} \Bigg\}  \leq C   \int_0^{\infty} f^{\ast} (s) \big[ 1 + \log^+ (1/s)  \big] ds \quad (f \in H^1 (\mathbb{R})). $$
\end{theo}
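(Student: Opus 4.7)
The proof plan is to invoke the Euclidean extrapolation result Theorem \ref{euclidean} directly, taking $T = S_{\mathbb{R}}$ and $r = 1$. Once the hypothesis of Theorem \ref{euclidean} is verified with $r = 1$, the stated estimate is exactly the conclusion of that theorem applied to the sublinear operator $S_{\mathbb{R}}$.

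To verify the hypothesis, I would split the range $p \in (1,2]$ into two pieces. On the interval $(1, p_0]$ for some $p_0$ sufficiently close to $1$, the bound
\begin{equation*}
\sup_{\substack{f \in H^p(\mathbb{R}) : \\ \|f\|_{L^p(\mathbb{R})} = 1}} \|S_{\mathbb{R}}(f)\|_{L^p(\mathbb{R})} \lesssim (p-1)^{-1}
\end{equation*}
is precisely the upper half of the asymptotic equivalence \eqref{sharp_Hp}. On the complementary range $[p_0, 2]$, the operator norm $\|S_{\mathbb{R}}\|_{L^p(\mathbb{R}) \to L^p(\mathbb{R})}$ is uniformly bounded by the classical strong-type result for the Littlewood--Paley operator (Chapter IV of \cite{Singular_integrals}); since $(p-1)^{-1}$ stays bounded above and below by positive constants on $[p_0, 2]$, this uniform bound is trivially absorbed into a constant multiple of $(p-1)^{-1}$. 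Concatenating the two ranges gives the single hypothesis
\begin{equation*}
\sup_{\substack{f \in H^p(\mathbb{R}) : \\ \|f\|_{L^p(\mathbb{R})} = 1}} \|S_{\mathbb{R}}(f)\|_{L^p(\mathbb{R})} \leq C_0 (p-1)^{-1} \quad (p \in (1,2])
\end{equation*}
required by Theorem \ref{euclidean}.

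Applying Theorem \ref{euclidean} with $r = 1$ and $T = S_{\mathbb{R}}$ then produces exactly the conclusion of Theorem \ref{Zyg_Euclidean}. The only mild subtlety is that $S_{\mathbb{R}}$ is a priori defined only on Schwartz functions and must be extended to act on $H^1(\mathbb{R})$, but this is handled automatically by the extrapolation framework combined with the density of $H^1(\mathbb{R}) \cap H^p(\mathbb{R})$ in $H^1(\mathbb{R})$ for $p$ near $1$. I do not anticipate any substantive obstacle: all the genuinely hard work, namely the sharp $H^p \to L^p$ bound \eqref{sharp_Hp} and the $K$-closedness \eqref{K_PJ} underlying Theorem \ref{euclidean}, has already been done upstream.
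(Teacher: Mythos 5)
Your proposal is correct and follows exactly the route the paper takes: the paper proves Theorem \ref{Zyg_Euclidean} by combining the sharp bound \eqref{sharp_Hp} from \cite{BRS} with Theorem \ref{euclidean} applied to $T = S_{\mathbb{R}}$ and $r=1$. Your additional care in splitting $(1,2]$ at some $p_0$ (since \eqref{sharp_Hp} is only asserted as $p \to 1^+$) and in noting the density/extension point is a harmless elaboration of the same argument.
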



\subsection{An extension of a theorem of Meyer to the product setting}\label{Meyer_product}
This subsection focuses on the following extension of Meyer's inequality \eqref{1(c)} to the two-parameter setting.

\begin{theo}\label{Meyer_2d} If $\Lambda := \{ 3^k -3^l : k,l \in \mathbb{N}_0, 0 \leq l < k \}$, then there exists an absolute constant $C_0 > 0$ such that 
\begin{equation}\label{LlogL_norm}
\Bigg( \sum_{(m,n) \in \Lambda \times \Lambda} |\widehat{f} (m,n)|^2 \Bigg)^{1/2} \leq C_0   \| f \|_{L \log L (\mathbb{T}^2)} \quad \mathrm{for} \ \mathrm{all}\ f \in H^1 (\mathbb{T}^2).
\end{equation}
\end{theo}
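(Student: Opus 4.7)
The plan is to apply Theorem \ref{Yano_product} with $r = 1$ to the sublinear operator
$$ T(f) := \Bigg( \sum_{(m,n) \in \Lambda \times \Lambda} | \widehat{f} (m,n) |^2 \Bigg)^{1/2}, $$
viewed as the corresponding constant function on $\mathbb{T}^2$. For any $q \in [1, \infty)$ one has $\| T(f) \|_{L^q (\mathbb{T}^2)} = (2 \pi)^{2/q} \cdot T(f)$, so verifying the hypothesis of Theorem \ref{Yano_product} with $r = 1$ amounts to showing that for every analytic trigonometric polynomial $f$ on $\mathbb{T}^2$ and every $p \in (1,2]$,
\begin{equation*}
\Bigg( \sum_{(m,n) \in \Lambda \times \Lambda} |\widehat{f}(m,n)|^2 \Bigg)^{1/2} \lesssim \frac{1}{p-1} \, \| f \|_{L^p (\mathbb{T}^2)}.
\end{equation*}
Once this bidisc bound is in hand, Theorem \ref{Yano_product} yields \eqref{LlogL_norm} on analytic trigonometric polynomials, and a standard density argument (as at the end of Subsection \ref{simple_proof}) extends it to all of $H^1(\mathbb{T}^2)$.

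To prove the bidisc bound displayed above, my strategy is to iterate Meyer's one-parameter inequality \eqref{1(a)} in each variable. Fix $p \in (1,2]$. For a.e.\ $y \in \mathbb{T}$, the slice $f(\cdot, y)$ is an analytic trigonometric polynomial on $\mathbb{T}$, so \eqref{1(a)} yields
$$ \Bigg( \sum_{m \in \Lambda} |\widehat{f}^{(1)}(m, y) |^2 \Bigg)^{1/2} \lesssim (p-1)^{-1/2} \| f(\cdot , y) \|_{L^p (\mathbb{T})}, $$
where $\widehat{f}^{(1)} (m, y) := (2 \pi)^{-1} \int_{\mathbb{T}} f(x, y) e^{-imx} dx$. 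Raising to the $p$-th power and integrating in $y$ gives
$$ \| F \|_{L^p ( \mathbb{T}_y ; \, \ell^2 (\Lambda) )} \lesssim (p-1)^{-1/2} \| f \|_{L^p (\mathbb{T}^2)}, $$
where $F(y) := ( \widehat{f}^{(1)} (m, y) )_{m \in \Lambda}$. Since $f$ is analytic in $y$ as well, $F$ is an $\ell^2$-valued analytic trigonometric polynomial on $\mathbb{T}$ with $\widehat{F}(n) = ( \widehat{f} (m, n) )_{m \in \Lambda}$. Applying a vector-valued analogue of \eqref{1(a)},
$$ \Bigg( \sum_{n \in \Lambda} \| \widehat{F}(n) \|_{\ell^2}^2 \Bigg)^{1/2} \lesssim (p-1)^{-1/2} \| F \|_{L^p ( \mathbb{T} ; \, \ell^2 ) }, $$
and combining with the preceding display produces the desired $(p-1)^{-1}$ estimate.

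The main obstacle is justifying this vector-valued extension of \eqref{1(a)} to $\ell^2$-valued analytic trigonometric polynomials with the same quantitative blow-up $(p-1)^{-1/2}$ as in the scalar case. Since $\ell^2$ is a Hilbert space (and in particular UMD), the ingredients that enter Meyer's proof of \eqref{1(a)} --- $L^p$-boundedness of the periodic Hilbert transform and of Littlewood--Paley projections adapted to the lacunary sequence $(3^k)_{k \geq 0}$, together with Khintchine-type inequalities --- all transfer to the $\ell^2$-valued setting with the same behaviour as $p \to 1^+$. I would therefore carry out this step either by revisiting Meyer's original argument in \cite{Meyer} and replacing each scalar estimate by its $\ell^2$-valued counterpart, or by running an analogous $K$-closedness argument in the spirit of Kislyakov and Xu \cite{KX_product} in the $\ell^2$-valued framework.
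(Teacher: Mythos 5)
Your overall architecture is sound and reaches the theorem, but it differs from the paper's proof in both halves, so let me compare. For the extrapolation step, the paper does not apply Theorem \ref{Yano_product} to a constant-function operator; it applies it to the truncated multiplier operators $T_{\Lambda_{N_1} \times \Lambda_{N_2}}$ with symbol $\chi_{\Lambda_{N_1} \times \Lambda_{N_2}}$, which only yields an $L^1(\mathbb{T}^2)$ bound on $T_{\Lambda_{N_1} \times \Lambda_{N_2}}(f)$; it then upgrades the $L^1$ norm to the $L^2$ norm (hence, by Parseval, to the $\ell^2$ sum) via Remark \ref{trivial_obs1} and the fact that $\Lambda \times \Lambda$ is a $\Lambda(p)$ set for $p>2$ (cited from \cite{Bakas_PZ}), and finally lets $N_1, N_2 \to \infty$. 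Your device of feeding the constant function $f \mapsto (\sum |\widehat f(m,n)|^2)^{1/2}$ into Theorem \ref{Yano_product} is legitimate --- it satisfies the paper's sublinearity axioms, and a constant has comparable $L^p(\mathbb{T}^2)$ norms for all $p$ --- and it genuinely buys you something: you bypass the $L^1 \to L^2$ upgrade and the $\Lambda(p)$ property of $\Lambda \times \Lambda$ altogether.

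For the $H^p(\mathbb{T}^2)$ bound with constant $(p-1)^{-1}$ (Proposition \ref{Meyer_2d_a} in the paper), the two arguments are tensorizations performed at different levels. The paper works directly in two parameters: duality against functions with spectrum in $\Lambda'_{k_1} \times \Lambda'_{k_2}$, using that the product of two Sidon sets is $\Lambda(p)$ with constant $Cp$ (citable from \cite{Pisier_Sem}), followed by the two-parameter Littlewood--Paley inequality \eqref{LP_ineq}, itself obtained by iterating Stein's multiplier theorem with Rademacher randomization and Khintchine. You instead slice and invoke an $\ell^2$-valued version of \eqref{1(a)}. That lemma is true, but it carries the entire content of the two-parameter statement and is not available off the shelf: Meyer's theorem as cited is scalar. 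Proving it requires (i) the Hilbert-space-valued Kahane--Khintchine/$\Lambda(p)$ estimate for Sidon sets with constant $C\sqrt{p}$, and (ii) an $\ell^2$-valued Littlewood--Paley inequality on $H^p(\mathbb{T})$ with constant uniform for $p \in (1,2]$. On (ii), be careful with the appeal to UMD: UMD-based $L^p$ theory degenerates as $p \to 1^+$, and the uniformity down to $p=1$ is exactly where analyticity and Stein's $H^1$ multiplier theorem \cite{Stein_1, Stein_2} must enter, as in \eqref{uniform_bound}. Once (i) and (ii) are written out, you will essentially have reproduced the paper's proof of Proposition \ref{Meyer_2d_a}, so the net simplification of your route lies in the extrapolation step, not here. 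As written, the vector-valued lemma remains a sketch, and that is the one substantive gap in the proposal.
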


\begin{rmk} By arguing as in \cite{Bakas_Yano}, one shows that \eqref{LlogL_norm} is sharp in the sense that the exponent $r=1$ in the $L \log L$-norm of $f \in H^1 (\mathbb{T}^2)$ cannot be improved.
\end{rmk}

\begin{rmk} If we remove the analyticity assumption in Theorem \ref{Meyer_2d}, then the $L \log L$-norm in the right-hand side of \eqref{LlogL_norm} must be replaced by the $L \log^2 L $-norm; see \cite[Proposition 14]{Bakas_PZ}.
\end{rmk}

The proof of Theorem \ref{Meyer_2d} will be obtained as a consequence of Theorem \ref{Yano_product} combined with the following extension of \eqref{1(a)} to the two-torus.

\begin{proposition}\label{Meyer_2d_a} If $\Lambda := \{ 3^k -3^l : k,l \in \mathbb{N}_0, 0 \leq l < k \}$, then there exists an absolute constant $A_0 > 0$ such that 
\begin{equation}\label{LlogL}
\Bigg( \sum_{(m,n) \in \Lambda \times \Lambda} | \widehat{f} (m,n) |^2 \Bigg)^{1/2} \leq \frac{ A_0 }{p-1} \| f \|_{L^p (\mathbb{T}^2)} \quad (f \in H^p (\mathbb{T}^2))
\end{equation}
for all $1 < p \leq 2$.
\end{proposition}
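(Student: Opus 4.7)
The plan is to prove \eqref{LlogL} by iterating Meyer's one-parameter inequality \eqref{1(a)} in each of the two toral variables, using Minkowski's integral inequality to interchange an $\ell^2$-sum and an $L^p$-integral between the two applications. Since \eqref{1(a)} supplies a factor of $(p-1)^{-1/2}$ per iteration, two iterations are consistent with the exponent $(p-1)^{-1}$ claimed in the proposition.

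More precisely, fix $f \in H^p(\mathbb{T}^2)$ with $1 < p \leq 2$ and set
\[ a_m(y) := \int_{\mathbb{T}} f(x,y) e^{-imx}\, dx = \sum_{n \in \mathbb{N}_0} \widehat{f}(m,n) e^{iny} \quad (m \in \mathbb{N}_0). \]
Since $\mathrm{supp}(\widehat{f}) \subseteq \mathbb{N}_0^2$, each $a_m$ has spectrum in $\mathbb{N}_0$ and hence lies in $H^p(\mathbb{T})$ as soon as it lies in $L^p(\mathbb{T})$; by Fubini, $f(\cdot, y) \in H^p(\mathbb{T})$ for a.e.\ $y \in \mathbb{T}$. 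For each such $y$, applying \eqref{1(a)} in the $x$-variable to $f(\cdot, y)$ yields $\bigl(\sum_{m \in \Lambda} |a_m(y)|^2\bigr)^{1/2} \lesssim (p-1)^{-1/2} \|f(\cdot,y)\|_{L^p(\mathbb{T})}$, and raising to the $p$-th power and integrating in $y$ produces
\[ \Bigl\| \Bigl( \sum_{m \in \Lambda} |a_m|^2 \Bigr)^{1/2} \Bigr\|_{L^p(\mathbb{T})} \lesssim (p-1)^{-1/2} \|f\|_{L^p(\mathbb{T}^2)}. \]

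Next I would apply \eqref{1(a)} a second time, now to each $a_m$ as a function of $y$, to obtain $\bigl(\sum_{n \in \Lambda} |\widehat{f}(m,n)|^2\bigr)^{1/2} \lesssim (p-1)^{-1/2} \|a_m\|_{L^p(\mathbb{T})}$. Squaring this and summing over $m \in \Lambda$ reduces the proposition to showing that $\bigl(\sum_{m \in \Lambda} \|a_m\|_{L^p(\mathbb{T})}^2\bigr)^{1/2} \lesssim (p-1)^{-1/2} \|f\|_{L^p(\mathbb{T}^2)}$, which is precisely where Minkowski's integral inequality enters: since $2/p \geq 1$, Minkowski gives
\[ \Bigl( \sum_{m \in \Lambda} \|a_m\|_{L^p(\mathbb{T})}^2 \Bigr)^{1/2} \leq \Bigl\| \Bigl( \sum_{m \in \Lambda} |a_m|^2 \Bigr)^{1/2} \Bigr\|_{L^p(\mathbb{T})}, \]
and the right-hand side has just been estimated in the previous display.

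The main obstacle is this interchange step: in order to save the factor $(p-1)^{-1/2}$ twice (rather than squandering it, or seeing the two small exponents add instead of compound), the $\ell^2$-sum must sit \emph{outside} the $L^p$-integral, which is exactly the direction of Minkowski's inequality that requires $p \leq 2$. Once this is in hand, combining the two displays yields \eqref{LlogL} with $A_0$ comparable to the square of Meyer's one-parameter constant, and the remaining points — that the slices $f(\cdot, y)$ and partial Fourier coefficients $a_m$ genuinely lie in $H^p(\mathbb{T})$ — are routine consequences of Fubini's theorem together with the spectral characterisation of $H^p(\mathbb{T}^d)$ for $p \geq 1$.
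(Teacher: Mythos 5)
Your proof is correct, but it takes a genuinely different route from the one in the paper. You treat the two variables asymmetrically: apply Meyer's one-parameter estimate \eqref{1(a)} to the horizontal slices $f(\cdot,y)\in H^p(\mathbb{T})$, obtain $\|(\sum_{m\in\Lambda}|a_m|^2)^{1/2}\|_{L^p(\mathbb{T})}\lesssim(p-1)^{-1/2}\|f\|_{L^p(\mathbb{T}^2)}$, then apply \eqref{1(a)} again to each partial Fourier coefficient $a_m\in H^p(\mathbb{T})$ in the $y$-variable, and glue the two bounds with the embedding $L^p(\ell^2)\hookrightarrow\ell^2(L^p)$, which is precisely the direction of Minkowski that requires $p\le 2$. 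The paper instead reruns Meyer's whole construction \emph{intrinsically in two variables}: Lemma \ref{trivial_obs2} (translation invariance of $\Lambda(p)$ constants) and Lemma \ref{Sidon_product} (the Sidon property of $(3^l)_{l\ge0}$, duality, and the $p$-growth of $\Lambda(p)$ constants of Sidon $\times$ Sidon sets) produce a $(p-1)^{-1}$-bound for each product block $\Lambda'_{k_1}\times\Lambda'_{k_2}$, after which the blocks are summed using the two-parameter Littlewood--Paley inequality \eqref{LP_ineq}, itself proved by iterating Stein's $H^p$ multiplier theorem and applying a multi-dimensional Khintchine inequality. Your argument is shorter and more modular — it black-boxes the one-dimensional theorem and needs only Fubini plus the $p\le 2$ direction of Minkowski, with the constant ending up as the square of the one-dimensional constant. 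The paper's argument is self-contained in the bi-parameter setting and displays the mechanism (Sidon structure, $\Lambda(p)$ duality, product Littlewood--Paley) explicitly, which is arguably more informative and makes the dependence on Stein's multiplier theorem visible, but it is not strictly necessary for this proposition given \eqref{1(a)}. One small point worth making explicit in your write-up: the fact that $a_m\in L^p(\mathbb{T})$ with spectrum in $\mathbb{N}_0$ — hence $a_m\in H^p(\mathbb{T})$ — follows from $|a_m(y)|\le\|f(\cdot,y)\|_{L^p(\mathbb{T})}$ together with the identity $\widehat{a_m}(n)=\widehat{f}(m,n)$, and the a.e.\ statement $f(\cdot,y)\in H^p(\mathbb{T})$ uses that for each of the countably many $m<0$ the function $a_m$ vanishes a.e., so one may intersect those full-measure sets.
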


The proof of Proposition \ref{Meyer_2d_a} is obtained by adapting the argument of Meyer establishing \cite[Th\'eor\`eme 1 (a)]{Meyer} to the two-parameter setting and it does not involve any methods and concepts related to the topics discussed in Section \ref{Yano_proofs}. For this reason, we first present the proof of Theorem \ref{Meyer_2d} under the assumption that Proposition \ref{Meyer_2d_a} holds true and then we proceed to the proof of  Proposition \ref{Meyer_2d_a} in a separate subsection.

\begin{rmk} Notice that, in view of \cite[Lemma 2.2]{Tao}, \eqref{1(c)} also implies \eqref{1(a)}  and \eqref{LlogL_norm} also implies \eqref{LlogL}.
\end{rmk}

\subsubsection{Proof of Theorem \ref{Meyer_2d} assuming that Proposition \ref{Meyer_2d_a} holds} For $N \in \mathbb{N}$, let $ \Lambda_N : = \{ 3^k -3^l: k, l \in \mathbb{N}_0, 0 \leq l < k \leq N \} $. For fixed $N_1, N_2 \in \mathbb{N}$, consider the multiplier operator $T_{\Lambda_{N_1} \times \Lambda_{N_2} }$ with symbol $\chi_{\Lambda_{N_1} \times \Lambda_{N_2} }$, that is for every trigonometric polynomial $g$ on $\mathbb{T}^2$ one has 
$$ T_{\Lambda_{N_1} \times \Lambda_{N_2} } (g) (x,y) = \sum_{(m,n) \in \Lambda_{N_1} \times \Lambda_{N_2}} \widehat{g} (m,n) e^{i (mx +ny)} \quad \mathrm{for}\  (x,y) \in \mathbb{T}^2. $$

Observe that it follows from Parseval's identity, H\"older's inequality and Proposition \ref{Meyer_2d_a} that $T_{\Lambda_{N_1} \times \Lambda_{N_2}}$ is bounded on $H^p (\mathbb{T}^2)$ with corresponding operator norm growing like $(p-1)^{-1}$ as $p \rightarrow 1^+$.  We thus conclude from Theorem \ref{Yano_product} that 
\begin{equation}\label{L^1_LlogL}
\| T_{\Lambda_{N_1} \times \Lambda_{N_2} }  (f) \|_{L^1 (\mathbb{T}^2)} \leq B \| f \|_{L \log L (\mathbb{T}^2)} \quad (f \in H^1 (\mathbb{T}^2)),
\end{equation}
where $B>0$ is independent of $f$ and $N_1, N_2$. Since $\| \cdot \|_{L^1 (\mathbb{T}^2)} \leq \| \cdot \|_{L^2 (\mathbb{T}^2)}$, at first glance, \eqref{L^1_LlogL} seems to be weaker than the desired estimate \eqref{LlogL}. 
However, as $\Lambda \times \Lambda $ is a $\Lambda (p)$ set for all $p > 2$; see \cite[Proposition 14]{Bakas_PZ} and $\Lambda_{N_1} \times \Lambda_{N_2} \subseteq \Lambda \times \Lambda$, it follows from Parseval's identity, the definition of $T_{\Lambda_{N_1} \times \Lambda_{N_2} }$ and Remark \ref{trivial_obs1} that 
$$
\Bigg( \sum_{(m,n) \in \Lambda_{N_1} \times \Lambda_{N_2} } |\widehat{f} (m,n) |^2\Bigg)^{1/2} =  \| T_{\Lambda_{N_1} \times \Lambda_{N_2}} (f) \|_{L^2 (\mathbb{T}^2)}  \leq C \| T_{\Lambda_{N_1} \times \Lambda_{N_2}} (f) \|_{L^1 (\mathbb{T}^2)} ,
$$
where $C>0$ is a constant independent of $f$ and $N_1, N_2 $. Therefore, the proof of Theorem \ref{Meyer_2d} is complete, in view of \eqref{L^1_LlogL} and the last estimate, by taking $N_1, N_2 \rightarrow \infty$.

\subsubsection{Proof of Proposition \ref{Meyer_2d_a}}

To prove Proposition \ref{Meyer_2d_a}, we shall adapt the corresponding argument of Meyer \cite{Meyer} to the product setting and for this, we need the following two lemmas.  

\begin{lemma}\label{trivial_obs2} Let $G$ be a compact abelian group and let $p>2$ be given. If $\Lambda \subseteq \widehat{G}$  is a $\Lambda(p)$ set, then for every $\gamma_0 \in \widehat{G}$ the set $\Lambda_{\gamma_0} : = \{ \gamma_0 - \gamma' : \gamma' \in \Lambda \}$ is also a $\Lambda(p)$ set with $ A(\Lambda_{\gamma_0}, p) = A (\Lambda,p)$. 
\end{lemma}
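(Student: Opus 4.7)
The plan is to exploit the fact that every character $\gamma_0 \in \widehat{G}$ has modulus one, so that pointwise multiplication by $\gamma_0$ (or by its complex conjugate $\overline{\gamma_0}$) is an isometry of $L^q(G)$ for every $q \in [1,\infty]$. This will reduce an inequality for trigonometric polynomials spectrally supported in $\Lambda_{\gamma_0}$ to the corresponding inequality for trigonometric polynomials spectrally supported in $\Lambda$, with no change in constants.

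Concretely, I would fix a trigonometric polynomial $f$ on $G$ with $\mathrm{supp}(\widehat{f}) \subseteq \Lambda_{\gamma_0}$ and rewrite it as
$$ f = \sum_{\gamma' \in \Lambda} \widehat{f}(\gamma_0 - \gamma')\,(\gamma_0 - \gamma') = \gamma_0 \cdot \sum_{\gamma' \in \Lambda} \widehat{f}(\gamma_0 - \gamma')\,\overline{\gamma'} =: \gamma_0 \cdot h, $$
so that $h$ is a trigonometric polynomial whose spectrum lies in $-\Lambda$. Since $|\gamma_0(x)| \equiv 1$, one has $\|f\|_{L^q(G)} = \|h\|_{L^q(G)}$ for all $q$, and passing to the complex conjugate produces a trigonometric polynomial $\overline{h}$ with $\mathrm{supp}(\widehat{\overline{h}}\,) \subseteq \Lambda$ and $\|\overline{h}\|_{L^q(G)} = \|h\|_{L^q(G)}$ for all $q$.

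The $\Lambda(p)$ hypothesis applied to $\overline{h}$ then gives
$$ \|f\|_{L^p(G)} = \|\overline{h}\|_{L^p(G)} \leq A(\Lambda,p)\,\|\overline{h}\|_{L^2(G)} = A(\Lambda,p)\,\|f\|_{L^2(G)}, $$
which shows that $\Lambda_{\gamma_0}$ is $\Lambda(p)$ with $A(\Lambda_{\gamma_0},p) \leq A(\Lambda,p)$. The reverse inequality will follow by running the same argument backwards, using the identity $\Lambda = \gamma_0 - \Lambda_{\gamma_0}$, which yields $A(\Lambda,p) \leq A(\Lambda_{\gamma_0},p)$ and hence the claimed equality.

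There is no substantive obstacle in this lemma: the content is simply the joint invariance of the $\Lambda(p)$ property and of the $\Lambda(p)$ constant under modulation by a character and under negation of the index set. The only thing to keep straight is the bookkeeping of signs and complex conjugates in relating the spectra of $f$, $h$, and $\overline{h}$.
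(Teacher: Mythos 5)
Your proof is correct and is essentially the same as the paper's. The paper sets $g := \gamma_0 \overline{f}$ directly and notes $\mathrm{supp}(\widehat{g}) \subseteq \Lambda$ with $\|g\|_{L^q} = \|f\|_{L^q}$; your auxiliary function $\overline{h}$ equals this $g$ exactly (since $h = \overline{\gamma_0} f$ gives $\overline{h} = \gamma_0 \overline{f}$), so the two arguments coincide up to presentation.
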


\begin{proof} The lemma is a direct consequence of the definition of $\Lambda(p)$ sets. Indeed, for a fixed $\gamma_0 \in \widehat{G}$, take an arbitrary trigonometric polynomial $f$ on $G$ with $\mathrm{supp}(\widehat{f}) \subseteq \Lambda_{\gamma_0}$ and observe that $ g (x) : = \gamma_0 (x)  \overline{ f (x) }$, $x \in G$, is a trigonometric polynomial on $G$ with $\mathrm{supp} (\widehat{g}) \subseteq \Lambda$ and
$$ \| f \|_{L^p (G)} = \| g \|_{L^p (G)} \leq A(\Lambda,p) \| g \|_{L^2 (G)} = A(\Lambda,p) \| f \|_{L^2 (G)}. $$
Hence, $\Lambda_{\gamma_0}$ is a $\Lambda(p)$ set with $ A(\Lambda_{\gamma_0}, p) \leq A (\Lambda,p)$. Having established that $\Lambda_{\gamma_0}$ is a $\Lambda(p)$ set, one then deduces that $A(\Lambda , p)  \leq A(\Lambda_{\gamma_0}, p)$ similarly.
\end{proof}
 
\begin{lemma}\label{Sidon_product} Let $\phi \in C^{\infty}_c (\mathbb{R})$ be such that $\mathrm{supp(\phi)} \subseteq [1/9,9]$ and $\phi|_{[1/3,3]} \equiv 1$. For $j \in \mathbb{N}_0$, consider the multiplier operator $T_j$ satisfying
$$ \widehat{T_j (g)} (n) = \phi(3^{-j} n) \widehat{g} (n) \quad (n \in \mathbb{Z})$$
for every trigonometric polynomial $g$ on $\mathbb{T}$. 

Then, there exists an absolute constant $D_0 >0$ such that for every $k_1, k_2 \in \mathbb{N}$ one has
\begin{equation}\label{2d_projection}
\Bigg( \sum_{0 \leq l_1 < k_1} \sum_{0 \leq l_2 < k_2} | \widehat{f} (3^{k_1} - 3^{l_1}, 3^{k_2} - 3^{l_2}) |^2 \Bigg)^{1/2} \leq 
\frac{D_0}{p - 1} \big\| T_{k_1} \otimes T_{k_2} (f) \big\|_{L^p (\mathbb{T}^2)}
\end{equation}
 for all $1 < p \leq 2$.
\end{lemma}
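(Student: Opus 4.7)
The plan is to prove \eqref{2d_projection} by duality, reducing it to a $\Lambda(p')$-type estimate for trigonometric polynomials whose spectrum lies in the product set $E(k_1,k_2):=\{(3^{k_1}-3^{l_1},3^{k_2}-3^{l_2}):0\le l_i<k_i\}$, and then establishing that estimate by iterating the one-dimensional Sidon bound of Rudin. The initial observation is that for $0\le l_i<k_i$ one has $3^{-k_i}(3^{k_i}-3^{l_i})=1-3^{l_i-k_i}\in[2/3,1)\subseteq[1/3,3]$, so the symbol $\phi(3^{-k_1}m)\phi(3^{-k_2}n)$ of $T_{k_1}\otimes T_{k_2}$ equals $1$ on $E(k_1,k_2)$. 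By Parseval, the left-hand side of \eqref{2d_projection} equals
\[
\sup_{\|c\|_{\ell^2}=1}\Bigl|\int_{\mathbb T^2}(T_{k_1}\otimes T_{k_2})(f)(x,y)\,\overline{g(x,y)}\,dxdy\Bigr|,
\]
where $g(x,y):=\sum_{l_1<k_1,\,l_2<k_2}c_{l_1,l_2}\,e^{i(3^{k_1}-3^{l_1})x}e^{i(3^{k_2}-3^{l_2})y}$. H\"older's inequality then reduces \eqref{2d_projection} to showing, uniformly in $k_1,k_2$, that $\|g\|_{L^{p'}(\mathbb T^2)}\lesssim (p-1)^{-1}\|g\|_{L^2(\mathbb T^2)}$ for every such $g$.

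Set $E_i:=\{3^{k_i}-3^{l_i}:0\le l_i<k_i\}$. Lemma \ref{trivial_obs2}, applied with $\gamma_0=3^{k_i}$, identifies the $\Lambda(q)$ constant of $E_i$ with that of the finite set $\{3^{l_i}:0\le l_i<k_i\}$, which is a subset of the classical lacunary Sidon set $\{3^l\}_{l\in\mathbb N_0}$. Hence, by Rudin's theorem on Sidon sets recalled in the subsection on $\Lambda(p)$ sets, there is an absolute constant $K>0$ with $A(E_i,q)\le K\sqrt{q}$ for every $q>2$, independently of $k_i$. Since $p\le 2$ we have $p'\ge 2$, so for each fixed $y$ the trigonometric polynomial $g(\cdot,y)$ has spectrum in $E_1$ and satisfies $\|g(\cdot,y)\|_{L^{p'}_x}\le K\sqrt{p'}\,\|g(\cdot,y)\|_{L^2_x}$.

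Taking $L^{p'}_y$ norms of both sides and using Minkowski's integral inequality (which applies precisely because $p'/2\ge 1$) to swap the orders of integration gives
\[
\|g\|_{L^{p'}(\mathbb T^2)}\le K\sqrt{p'}\,\bigl\|\|g(\cdot,y)\|_{L^2_x}\bigr\|_{L^{p'}_y}\le K\sqrt{p'}\,\bigl\|\|g(x,\cdot)\|_{L^{p'}_y}\bigr\|_{L^2_x}.
\]
Applying the one-dimensional Sidon estimate a second time, this time to $g(x,\cdot)$ whose spectrum lies in $E_2$, yields $\|g(x,\cdot)\|_{L^{p'}_y}\le K\sqrt{p'}\,\|g(x,\cdot)\|_{L^2_y}$, and combining the two bounds produces $\|g\|_{L^{p'}(\mathbb T^2)}\le K^2p'\,\|g\|_{L^2(\mathbb T^2)}$. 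Since $p'=p/(p-1)\le 2/(p-1)$ for $p\in(1,2]$, this is the required $(p-1)^{-1}$ bound and the lemma follows with $D_0=2K^2$. The main obstacle is the Minkowski swap needed to iterate the one-variable Sidon estimate across the two factors; the fact that the hypothesis $p\le 2$ forces $p'\ge 2$ is exactly what makes that swap legal, and Rudin's $\sqrt{q}$-bound applied twice produces the sharp linear dependence on $p'$ that matches the target $r=1$ in the product setting.
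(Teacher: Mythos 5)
Your proof is correct and follows the same overall scheme as the paper's: reduce \eqref{2d_projection} to a $\Lambda(p')$ estimate for the spectrum $E(k_1,k_2)=\Lambda'_{k_1}\times\Lambda'_{k_2}$ via Parseval and H\"older, use Lemma~\ref{trivial_obs2} to shift the $\Lambda(q)$ constant back to a subset of the lacunary Sidon set $\{3^l\}$, and then exploit the fact that $\widehat{T_{k_1}\otimes T_{k_2}(f)}$ coincides with $\widehat{f}$ on $E(k_1,k_2)$ because $3^{-k}(3^k-3^l)\in[2/3,1)\subset[1/3,3]$.

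The one place you diverge is in the proof of the product $\Lambda(p')$ bound $\|g\|_{L^{p'}(\mathbb T^2)}\lesssim p'\,\|g\|_{L^2(\mathbb T^2)}$: the paper quotes Pisier's Remarque that the product of two Sidon sets is $\Lambda(q)$ with constant growing like $q$, while you rederive this by applying Rudin's one-dimensional estimate $A(E_i,q)\le K\sqrt q$ in each coordinate and gluing the two mixed norms with Minkowski's integral inequality, which is applicable exactly because $p'\ge 2$. Your version is more self-contained and makes the linear dependence on $p'$ (hence the $(p-1)^{-1}$ in the lemma) completely explicit; the paper's version is shorter given the external reference. Both are correct.
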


\begin{proof}  We argue as in \cite{Meyer}. For $k \in \mathbb{N}$, we write $\widetilde{\Lambda}_k := (3^l)_{l=0}^{k-1}$ and $ \Lambda'_k : = \{ 3^k -3^l: l \in \mathbb{N}_0, 0 \leq l < k \} $. 
Observe that since $\widetilde{ \Lambda} := (3^l)_{l \in \mathbb{N}_0}$ is Sidon, $\widetilde{ \Lambda} \times \widetilde{ \Lambda}$ is a $\Lambda (p)$ set for all $p>2$ whose $\Lambda (p)$ constant $A(\widetilde{ \Lambda} \times \widetilde{ \Lambda}, p)$ grows like $p$ as $p \rightarrow \infty$; see e.g. \cite[Remarque, p. 24]{Pisier_Sem}. Hence, it follows that for every fixed $k_1, k_2 \in \mathbb{N}$, one has $ A (\widetilde{\Lambda}_{k_1} \times \widetilde{\Lambda}_{k_2}, p) \leq  A(\widetilde{ \Lambda} \times \widetilde{ \Lambda}, p) \leq C p$ for all $p>2$, where $C>0$ is an absolute constant. We thus deduce from Lemma \ref{trivial_obs2} that 
\begin{equation}\label{2d_Sidon}
\| g \|_{L^p (\mathbb{T}^2)} \leq C p \| g \|_{L^2 (\mathbb{T}^2)} \quad (p>2)
\end{equation}
for every trigonometric polynomial $g$ on $\mathbb{T}^2$ such that $\mathrm{supp}(\widehat{g}) \subseteq \Lambda'_{k_1} \times \Lambda'_{k_2}$.

Hence, for any given $1 < p \leq 2$ and $h \in L^p (\mathbb{T}^2)$, it follows from duality, H\"older's inequality, and \eqref{2d_Sidon} that
\begin{align*}
\Bigg( \sum_{(m,n) \in \Lambda'_{k_1} \times \Lambda'_{k_2}} | \widehat{h} (m,n)  |^2 \Bigg)^{1/2} & = \sup_{\substack{ \mathrm{supp}(\widehat{g}) \subseteq \Lambda'_{k_1} \times \Lambda'_{k_2} :\\ \| g \|_{L^2 (\mathbb{T}^2) } = 1}} \Bigg| \int_{\mathbb{T}^2} h(x,y) \overline{g(x,y)} dx dy \Bigg|  \\
& \leq \| h \|_{L^p (\mathbb{T}^2)} \Bigg( \sup_{\substack{ \mathrm{supp}(\widehat{g}) \subseteq \Lambda'_{k_1} \times \Lambda'_{k_2} :\\ \| g \|_{L^2 (\mathbb{T}^2) } = 1}} \| g \|_{L^{p'} (\mathbb{T}^2)} \Bigg) \\
& \leq C p' \| h \|_{L^p (\mathbb{T}^2)} \\
& = C \frac{p}{p-1} \| h \|_{L^p (\mathbb{T}^2)} . 
\end{align*} 
Therefore, \eqref{2d_projection} is obtained from the last step by choosing $h := T_{k_1} \otimes T_{k_2} (f)$, since
$$ \widehat{f} (3^{k_1} - 3^{l_1}, 3^{k_2} - 3^{l_2}) = \widehat{h} (3^{k_1} - 3^{l_1}, 3^{k_2} - 3^{l_2}) $$
for all $l_1, l_2 \in \mathbb{N}_0$ with $0 \leq l_1 < k_1$, $ 0 \leq l_2 < k_2$. \end{proof}

Let $\phi$ and $T_j$ be as in the statement of Lemma \ref{Sidon_product}. Notice that \eqref{2d_projection} gives
\begin{equation}\label{intermediate}
\Bigg( \sum_{(m,n) \in \Lambda \times \Lambda} | \widehat{f} (m,n) |^2 \Bigg)^{1/2} \leq 
\frac{D_0}{p-1} \Bigg( \sum_{ (k_1, k_2) \in \mathbb{N}^2} \big\| T_{k_1} \otimes T_{ k_2} (f) \big\|^2_{L^p (\mathbb{T}^2)} \Bigg)^{1/2} .
\end{equation}
Observe that by using \eqref{intermediate} and Minkowski's inequality, one gets
$$ \Bigg( \sum_{(m,n) \in \Lambda \times \Lambda}  | \widehat{f} (m,n)  |^2 \Bigg)^{1/2} \leq  \frac{D_0}{p-1}   \Bigg\| \Bigg( \sum_{ (k_1, k_2) \in \mathbb{N}^2} | T_{k_1} \otimes T_{ k_2} (f) |^2 \Bigg)^{1/2} \Bigg\|_{L^p (\mathbb{T}^2)}  $$
and so, to complete the proof of Proposition \ref{Meyer_2d_a}, it suffices to show that
\begin{equation}\label{LP_ineq}
 \Bigg\| \Bigg( \sum_{ (k_1, k_2) \in \mathbb{N}^2}| T_{k_1} \otimes T_{ k_2} (f) |^2 \Bigg)^{1/2} \Bigg\|_{L^p (\mathbb{T}^2)} \lesssim \| f \|_{L^p (\mathbb{T}^2)} \quad (f \in H^p(\mathbb{T}^2)),
\end{equation}
where the implied constant is independent of $p \in (1,2]$ and $f \in H^p(\mathbb{T}^2)$. But the last estimate is a well-known Littlewood-Paley inequality; it follows, e.g., by iterating Stein's classical multiplier theorem \cite{Stein_1, Stein_2}. Indeed, to establish \eqref{LP_ineq}, let $(r_k)_{k \in \mathbb{N}}$ be a given sequence of Rademacher functions over some probability space $(\Omega, \mathcal{A}, \mathbb{P})$, i.e. $(r_k)_{k \in \mathbb{N}}$ is a sequence of independent random variables such that $\mathbb{P} ( \{ r_k = 1 \}) = \mathbb{P} (\{ r_k = -1 \}) = 1/2$ for all $k \in \mathbb{N}$. Then, consider the family of multiplier operators $(T_{\omega})_{\omega \in \Omega}$ given by
$$ T_{\omega} =  \sum_{k \in \mathbb{N}} r_k (\omega) T_k \quad (\omega \in \Omega) $$
and note that by employing Stein's multiplier theorem \cite{Stein_1, Stein_2} one deduces that $T_{\omega}$ is bounded on $(H^1 (\mathbb{T}), \| \cdot \|_{L^1(\mathbb{T})})$ 
with corresponding operator norm that is controlled by a constant depending only on $\phi$ and not on the choice of $\omega \in \Omega$ (see also \cite[Theorem 1.20]{CW}). Since, by Parseval's theorem, $T_{\omega}$ is bounded on $H^2 (\mathbb{T})$ with corresponding operator norm that is majorised by a constant depending only on $\phi$ and not on $\omega \in \Omega$, it follows from Theorem 3.9 in Chapter XII of \cite{Zygmund_book} that for all $p \in [1,2]$ and  $ \omega \in \Omega$ one has
\begin{equation}\label{uniform_bound}
 \| T_{\omega} \|_{(H^p (\mathbb{T}), \| \cdot \|_{L^p(\mathbb{T})}) \rightarrow (H^p (\mathbb{T}), \| \cdot \|_{L^p(\mathbb{T})}) } \leq C ,
\end{equation}
where $C>0$ is a constant that depends only on $\phi$ and not on $p \in [1,2]$, $\omega \in \Omega$. We thus conclude by iterating \eqref{uniform_bound} that for  all $p \in [1,2]$ and for each choice of $(\omega_1, \omega_2) \in \Omega^2$ one has
$$ \Bigg\| \sum_{ (k_1, k_2) \in \mathbb{N}^2} r_{k_1} (\omega_1) r_{k_2} (\omega_2) T_{k_1} \otimes T_{ k_2} (g) \Bigg\|_{L^p (\mathbb{T}^2)} \leq C^2 \| g \|_{L^p(\mathbb{T}^2)}  $$
for every analytic trigonometric polynomial $g$ on $\mathbb{T}^2$. Therefore, \eqref{LP_ineq} is obtained by using the last estimate combined with multi-dimensional Khintchine's inequality (see, e.g., Appendix D in \cite{Singular_integrals}) and the fact that the class of analytic trigonometric polynomials on $\mathbb{T}^2$ is dense in $H^q (\mathbb{T}^2)$ for $q < \infty$. This completes the proof of Proposition  \ref{Meyer_2d_a}.
 

\section*{Acknowledgements}  
The author would like to thank Salvador Rodr\'iguez-L\'opez for some interesting discussions and for his comments that improved the presentation of this manuscript. The author would also like to thank the referee for his/her useful remarks.

The author was supported by the `Wallenberg Mathematics Program 2018', grant no. KAW 2017.0425, financed by the Knut and Alice Wallenberg Foundation.



\begin{thebibliography}{1}
\bibitem{Bakas}  Bakas, Odysseas. \emph{Endpoint Mapping properties of the Littlewood-Paley square function}. Colloq. Math. \textbf{157} (2019), no. 1, 1--15.
\bibitem{Bakas_PZ}  Bakas, Odysseas. \emph{Variants of the Inequalities of Paley and Zygmund}. J. Fourier Anal. Appl. \textbf{25} (2019), no. 3, 1113--1133.
\bibitem{Bakas_Yano}  Bakas, Odysseas. \emph{A variant of Yano's extrapolation theorem on Hardy spaces}. Arch. Math. \textbf{113} (2019), no. 5, 537--549.
\bibitem{BRS} Bakas, Odysseas; Rodr\'iguez-L\'opez, Salvador; Sola Alan A. \emph{Multi-parameter extensions of a theorem of Pichorides}. Proc. Amer. Math. Soc., \textbf{147} (2019), no. 3, 1081--1095.
\bibitem{BR} Bennett, Colin; Rudnick, Karl. \emph{On Lorentz-Zygmund spaces} Dissertationes Math. \textbf{175} (1980), 67 pp. 
\bibitem{BS} Bennett, Colin; Sharpley Robert. \emph{Interpolation of operators}. 
Pure and Applied Mathematics, 129. Academic Press, Inc., Boston, MA, 1988. xiv+469 pp.
\bibitem{B-L_interpolation} Bergh, J\"oran; L\"ofstr\"om, J\"orgen. \emph{Interpolation Spaces, An Introduction}. Springer-Verlag, New
York, 1976.
\bibitem{Bonami} Bonami, Aline. \emph{\'Etude des coefficients de Fourier des fonctions de $L^p(G)$}. Ann. Inst. Fourier (Grenoble), Vol. 20 (1970), no. 2, 335--402.
\bibitem{Bourgain_84} Bourgain, Jean. \emph{New Banach space properties of the disc algebra and $H^{\infty}$}. Acta Math. \textbf{152} (1984), no. 1-2, 1--48.
\bibitem{Bourgain_Sidon} Bourgain, Jean. \emph{Sidon sets and Riesz products}. Ann. Inst. Fourier (Grenoble), vol. 35 (1985), no. 1, 137--148.
\bibitem{Bourgain_89} Bourgain, Jean. \emph{On the behavior of the constant in the Littlewood-Paley inequality}. In: Geometric Aspects of Functional Analysis (1987-88), pp. 202--208. Lecture notes in math. 1376, Springer Berlin, 1989.
\bibitem{Bourgain_92} Bourgain, Jean. \emph{Some consequences of Pisier's approach to interpolation}. Isr. J. Math. \textbf{77} (1992), no. 1-2, 165--185.
\bibitem{B-L} Bourgain, Jean; Lewko Mark. \emph{Sidonicity and variants of Kaczmarz's problem}. 
Ann. Inst. Fourier (Grenoble) \textbf{67} (2017), no. 3, 1321--1352.
\bibitem{BK} Brudny\u{\i}, Y. A.; Krugljak N. Y. \emph{Interpolation Functors and Interpolation Spaces}. North-Holland Mathematical Library, 47, vol. I, xvi+718 pp. North-Holland Publishing Co., Amsterdam, 1991.
\bibitem{CM} Carro, Mar\'ia J.; Mart\'in, Joaquim. \emph{Extrapolation theory for the real interpolation method}. Collect. Math. \textbf{53} (2002), no. 2, 165--186.
\bibitem{CW} Coifman, R. R.; Weiss G. \emph{Extensions of Hardy spaces and their use in analysis},  Bull. Amer. Math. Soc. \textbf{83} (1977), no. 4, 569--645. 
\bibitem{Duren} Duren, Peter L. \emph{Theory of $H^p$ spaces}. Vol. 38. New York: Academic press, 1970.
\bibitem{GH} Graham, Colin C.; Hare Kathryn E. \emph{Interpolation and Sidon Sets for Compact Groups}. Springer, New York, 2013.
\bibitem{JM_1991} Jawerth, Bj\"orn; Milman Mario. \emph{Extrapolation theory with applications}. Mem. Amer. Math. Soc. \textbf{89}, no. 440, iv+82 pp, 1991.
\bibitem{JM_1992} Jawerth, Bj\"orn; Milman, Mario. \emph{New results and applications of extrapolation theory, Interpolation spaces and related topics (Haifa, 1990)} Israel Math. Conf. Proc. Amer. Math. Soc. \textbf{5} (1992), Bar-Ilan univ., Ramat Gan, 81--105.
\bibitem{PJ_1} Jones, Peter W. \emph{$L^{\infty}$ estimates for the $\overline{\partial}$ problem in a half-plane}. Acta Math. \textbf{150} (1983), no. 1-2, 137--152.
\bibitem{PJ_2} Jones, Peter W. \emph{On interpolation between $H^1$ and $H^{\infty}$}. Interpolation spaces and allied topics in analysis (Lund, 1983), 1984. Lect. Notes in Math., 1070, 143--151 Springer, Berlin.
\bibitem{KM}  Karadzhov, Georgi E.; Milman, Mario. \emph{Extrapolation theory: new results and applications}. J. Approx. Theory \textbf{133} (2005), no. 1, 38--99.
\bibitem{K_1} Kislyakov, Serguei Vital'evich. \emph{Absolutely summing operators on the disc algebra}. Algebra i Analiz \textbf{3} (1991), no. 4, 1--77; translation in St. Petersburg Math. J. \textbf{3} (1992), no. 4, 705--774.
\bibitem{K_notes} Kislyakov, Serguei Vital'evich. \emph{Real interpolation of Hardy spaces on the disc and on the bidisc}. Functional Analysis (Essen, 1991), Lecture Notes in Pure and Appl. Math., vol. 150, Marcel Dekker, New York, 1994, 217--226.
\bibitem{K_3}
Kislyakov, Serguei Vital'evich. \emph{Interpolation of $H^p$-spaces: some recent developments}. Function spaces, interpolation spaces, and related topics (Haifa, 1995), 102--140, Israel Math. Conf. Proc., 13, Bar-Ilan Univ., Ramat Gan, 1999.
\bibitem{KX} Kislyakov, Serguei Vital'evich; Xu, Quanhua. \emph{Interpolation of weighted and vector-valued Hardy spaces}. Trans. Amer. Math. Soc. \textbf{343} (1994), no. 1, 1--34.
\bibitem{KX_product}
Kislyakov, Serguei Vital'evich;  Xu, Quanhua. \emph{Real interpolation and singular integrals}. Algebra i Analiz \textbf{8} (1996), no. 4, 75--109; translation in St. Petersburg Math. J. \textbf{8} (1997), no. 4, 593--615.
\bibitem{Kolmogorov} Kolmogorov, A. N. \emph{Sur les fonctions harmoniques conjugu\'ees et les s\'eries de Fourier}. Fund. Math. \textbf{7} (1925), 24--29.
 \bibitem{K-R}  Krasnosel'ski\u{\i}, M. A.; Ruticki\u{\i}, Ja. B. \emph{Convex functions and Orlicz spaces}, Translated from the first Russian edition by Leo F. Boron, P. Noordhoff Ltd., Groningen, 1961.
\bibitem{LLQR} Lef\`evre, P.; Li, D.; Queff\'elec, H.; Rodr\'iguez-Piazza, L. \emph{Composition operators on Hardy-Orlicz spaces}. Mem. Amer. Math. Soc. \textbf{207} (2010), no. 974, vi+74 pp.
\bibitem{Lerner} Lerner, Andrei K. \emph{Quantitative weighted estimates for the Littlewood-Paley square function and Marcinkiewicz multipliers}. Math. Res. Lett. \textbf{26} (2019), no. 2, 537--556.
\bibitem{MP} Marcus, Michael B.; Pisier, Gilles. \emph{Random Fourier Series with Applications to Harmonic Analysis}. Annals of Mathematics Studies, 101, p. 150. Princeton University Press and University of Tokyo Press, Princeton, 1981.
\bibitem{Meyer} Meyer, Yves. \emph{Endomorphismes des id\'eaux fermés de $L^1(G)$, classes de Hardy et s\'eries de Fourier lacunaires}. Ann. Sci. École Norm. Sup. (4), 1, 499--580, 1968.
\bibitem{Milman} Milman, Mario. \emph{Extrapolation and optimal decomposition with applications to analysis}. Lecture Notes in Math.  \textbf{1580}, Springer-Verlag, Berlin, 1994.
\bibitem{Muller} M\"uller, P. F. X. \emph{Holomorphic martingales and interpolation between Hardy spaces}. J. Anal. Math. \textbf{61} (1993), no. 1, 327--337.
\bibitem{Pich_thesis} Pichorides, Stylianos K. \emph{On the best values of the constants in the theorem of M. Riesz, Zygmund and Kolmogorov}. Studia Math. \textbf{44} (1972), no. 2, 165--179.
\bibitem{Pichorides}
Pichorides, Stylianos K. \emph{A remark on the constants of the Littlewood-Paley inequality}. Proc. Amer. Math. Soc., \textbf{114} (1992), no. 3, 787--789.
\bibitem{Pisier_Sidon} Pisier, Gilles. \emph{Ensembles de Sidon et processus gaussiens}. C. R. Acad. Sci. Paris S\'er. A-B \textbf{286} (1978), no. 15, A671--A674. 
\bibitem{Pisier_Sem} Pisier, Gilles. Sur l' espace de Banach des s\'eries de Fourier al\'eatoires presque s\^urement continues. Sem. Geom. des Espaces de Banach, Ec. Polytech. Cent. Math., 1977--1978, Exposes No. 12,13, 1978.
\bibitem{Pisier_I} Pisier, Gilles. \emph{Interpolation between $H^p$ spaces and noncommutative generalizations. I.} Pac. J. Math. \textbf{155} (1992), no. 2, 341--368.
\bibitem{Pisier_II} Pisier, Gilles. \emph{Interpolation between $H^p$ spaces and noncommutative generalizations. II.} Rev. Mat. Iberoam. \textbf{9} (1993), no. 2, 281--291.
\bibitem{Pisier_twofold} Pisier, Gilles. \emph{On uniformly bounded orthonormal Sidon systems}.  Math. Res. Lett. \textbf{24} (2017), no. 3, 893--932. 
\bibitem{RR}
Rao, M. M.; Ren Z. D. \emph{Theory of Orlicz spaces}.  Monographs and Textbooks in Pure and Applied Mathematics, 146. Marcel Dekker, Inc., New York, 1991. xii+449 pp.
\bibitem{Riesz} Riesz, Marcel. \emph{Sur les fonctions conjugu\'ees}. Math. Z. \textbf{27} (1928), no. 1, 218--244.
\bibitem{Rudin} Rudin, Walter. \emph{Trigonometric series with gaps}. J. Math. Mech. \textbf{9}, 203--227, 1960.
\bibitem{Rudin_book} Rudin, Walter. \emph{Fourier analysis on groups}. Reprint of the 1962 original. Wiley Classics Library. A Wiley-Interscience Publication. John Wiley \& Sons, Inc., New York, 1990. x+285 pp. 
\bibitem{Rudin_polydisk} Rudin, Walter. \emph{Function theory in polydiscs}. W.A. Benjamin, New York and
Amsterdam, 1969.
\bibitem{Sidon} Sidon, S. \emph{Verallgemeinerung eines Satzes \"uber die absolute Konvergenz von Fourierreihen mit L\"ucken}. Math. Ann. \textbf{97} (1927), no. 1, 675--676.
\bibitem{Stein_1} Stein, Elias M. \emph{Classes $H^p$  Multiplicateurs et fonctions de Littlewood-Paley}.  C.R. Acad. Sci. Paris. S\'er. A-B \textbf{263} (1966), no. 20, A716--A719.
\bibitem{Stein_2} Stein, Elias M. \emph{Classes $H^p$  Multiplicateurs et fonctions de Littlewood-Paley. Applications de resultats anterieurs}.  C.R. Acad. Sci. Paris. S\'er. A-B \textbf{264} (1966), no. 21, A780--A781.
\bibitem{Singular_integrals} Stein, Elias M. \emph{Singular integrals and differentiability properties of functions}. Vol. 30. Princeton university press, 2016.
\bibitem{Tao} Tao, Terence. \emph{A converse extrapolation theorem for translation-invariant operators}. J. Funct. Anal. \textbf{180} (2001), no. 1, 1--10.
\bibitem{TW} Tao, Terence; Wright, James. \emph{Endpoint multiplier theorems of Marcinkiewicz type}. Rev. Mat. Iberoam. \textbf{17} (2001), no. 3, 521--558.
\bibitem{Xu} Xu, Quanhua. \emph{Notes on interpolation of Hardy spaces}.  Ann. Inst. Fourier (Grenoble) \textbf{42} (1992), no. 4, 875--889. 
\bibitem{Yano}Yano, S. \emph{Notes on Fourier Analysis (XXIX): an extrapolation theorem}. J. Math. Soc. Japan \textbf{3} (1951), no. 2, 296--305.
\bibitem{Zygmund_29}
Zygmund, Antoni. \emph{Sur les fonctions conjugu\'ees}. Fund. Math. \textbf{13} (1929), 284--303.
%
\bibitem{Zygmund_38} Zygmund, Antoni. \emph{On the convergence and summability of power series on the circle of convergence (I)}. Fund. Math. \textbf{30} (1938), 170--196. 
\bibitem{Zygmund_book}
Zygmund, Antoni. \emph{Trigonometric series}. Vol. I, II.
Cambridge University Press, 2002.
\end{thebibliography}
\end{document}